\documentclass{amsart}

\newcommand{\R}{\mathbb{R}}

\newcommand{\Z}{\mathbb{Z}}
\newcommand{\D}{\mathbb{D}}
\newcommand{\T}{\mathbb{T}}
\newcommand{\De}{\mathcal{D}}
\newcommand{\sgn}{\operatorname{sgn}}
\newcommand{\pv}{\operatorname{p.v.}}

\newtheorem{thm}{Theorem}
\newtheorem{prop}{Proposition}
\newtheorem{lemma}{Lemma}

\newtheorem{conj}{Conjecture}

\theoremstyle{definition}

\theoremstyle{remark}
\newtheorem{remark}{Remark}

\begin{document}

\title[A computation of Poisson kernels for biharmonic operators]{A computation 
of Poisson kernels for\\ some standard weighted biharmonic operators\\ 
in the unit disc}

\date{\today}

\author{Anders Olofsson}

\address{Falugatan 22 1tr\\ SE-113 32 Stockholm\\ Sweden}

\email{ao@math.kth.se}

\subjclass{Primary: 31A30; Secondary: 35J40}

\keywords{Poisson kernel, standard weighted biharmonic operator, Dirichlet problem}

\begin{abstract}
We compute Poisson kernels for  
integer weight parameter standard weighted biharmonic operators
in the unit disc with Dirichlet boundary conditions.
The computations performed extend the supply of explicit examples 
of such kernels and suggest similar 
formulas for 
these Poisson kernels to hold true in more generality. 
Computations have been carried out using the open source 
computer algebra package Maxima.
\end{abstract}

\maketitle

\setcounter{section}{-1}

\section{Introduction}

\setcounter{equation}{0}
\setcounter{thm}{0}
\setcounter{prop}{0}
\setcounter{lemma}{0}
\setcounter{cor}{0}
\setcounter{remark}{0}

We address in this paper the problem of finding explicit formulas for Poisson kernels 
for weighted biharmonic operators of the form 
$\Delta w^{-1}\Delta$
in the unit disc $\D$ with Dirichlet boundary conditions, 
where $\Delta=\partial^2/\partial z\partial\bar z$, $z=x+iy$, 
is the Laplacian in the complex plane and $w=w_\gamma$ is a 
weight function of the form
$$
w_\gamma(z)=(1-\lvert z\rvert^2)^\gamma,\quad z\in\D,
$$ 
for some real parameter $\gamma>-1$. 
Such a weight function $w_\gamma$ is commonly referred to as a standard weight.

Let us first describe the context of these Poisson kernels. 
Let $w:\D\to(0,\infty)$ be a smooth radial weight function 
and consider the 
weighted biharmonic Dirichlet problem
\begin{equation}\label{Dirichletproblem}
\left\{
\begin{array}{rclc}
\Delta w^{-1}\Delta u&=&0& \text{in}\ \D,\\
u&=&f_0&\text{on}\ \T,\\
\partial_n u&=&f_1&\text{on}\ \T.
\end{array}
\right.
\end{equation}
Here $\T=\partial\D$ is the unit circle and 
$\partial_n$ denotes differentiation in the inward normal direction.
The first equation in (\ref{Dirichletproblem}), 
the biharmonic equation $\Delta w^{-1}\Delta u=0$, 
is evaluated in the distributional sense and defines 
a class of functions which we call $w$-biharmonic.    
In full generality 
the boundary datas $f_j\in\De'(\T)$ ($j=0,1$) are distributions 
on $\T$ 
and the boundary conditions in (\ref{Dirichletproblem}) 
are interpreted in a distributional sense as follows:
Let $u$ be a smooth function in $\D$ and let $f_0\in\De'(\T)$. 
We say that $u=f_0$ on $\T$ in the distributional sense if 
$\lim_{r\to1}u_r=f_0$ in $\De'(\T)$, where 
\begin{equation}\label{dfndilation}
u_r(e^{i\theta})=u(re^{i\theta}),\quad e^{i\theta}\in\T,
\end{equation}
for $0\leq r<1$.
Similarly, the inward normal derivative $\partial_n u$ of $u$ 
is defined by 
$$
\partial_n u=\lim_{r\to1}(u_r-f_0)/(1-r)\quad\text{in}\ \De'(\T)
$$
provided the limit exists,
where $u=f_0$ on $\T$ in the distributional sense. 
In an earlier paper~\cite{Olofsson05} we have shown that 
if the weight function $w$ is area integrable and 
has enough mass near the boundary, then the 
distributional Dirichlet problem (\ref{Dirichletproblem}) 
has a unique solution solution $u$, which has the representation 
\begin{equation}\label{representationformula}
u(z)=(F_{w,r}*f_0)(e^{i\theta})+(H_{w,r}*f_1)(e^{i\theta}),
\quad z=re^{i\theta}\in\D,
\end{equation}
in terms of two functions $F_w$ and $H_w$; 
here $F_{w,r}=(F_w)_r$ in accordance with \eqref{dfndilation}
and similarly for $H_w$. 
The symbol $*$ denotes convolution  of distributions on $\T$.
The Poisson kernels $F_w$ and $H_w$ are characterized  by the  problems 
$$
\left\{
\begin{array}{rclc}
\Delta w^{-1}\Delta F_w&=&0&\text{in}\ \D,\\
F_w&=&\delta_1&\text{on}\ \T,\\
\partial_n F_w&=&0&\text{on}\ \T,
\end{array}
\right.
\quad\text{and}\quad
\left\{
\begin{array}{rclc}
\Delta w^{-1}\Delta H_w&=&0&\text{in}\ \D,\\
H_w&=&0&\text{on}\ \T,\\
\partial_n H_w&=&\delta_1 &\text{on}\ \T,
\end{array}
\right.
$$
interpreted in the above distributional sense; here  
$\delta_{e^{i\theta}}$ denotes the unit Dirac mass at $e^{i\theta}\in\T$.

The above representation formula \eqref{representationformula}
applies to all weight functions of the form $w=w_\gamma$, where $\gamma>-1$, 
and we denote by
$F_\gamma=F_{w_\gamma}$ and $H_\gamma=H_{w_\gamma}$ 
the corresponding Poisson kernels for \eqref{Dirichletproblem} described above. 
Some formulas for $F_\gamma$ and $H_\gamma$ and are known.  
In the simplest classical unweighted case ($\gamma=0$) these kernels 
are given by 
$$
F_0(z)=\frac{1}{2}\frac{(1-\lvert z\rvert^2)^2}{\lvert1-z\rvert^2}
+\frac{1}{2}\frac{(1-\lvert z\rvert^2)^3}{\lvert1-z\rvert^4},
\quad z\in\D,
$$
and 
$$
H_0(z)=\frac{1}{2}\frac{(1-\lvert z\rvert^2)^2}{\lvert1-z\rvert^2},
\quad z\in\D,
$$
respectively (see~\cite{AH}). The functions $F_1$ and $H_1$ are also known 
and given by the formulas 
$$
F_1(z)=
\frac{1}{2}\frac{(1-\vert z\vert^2)^3}{\vert 1-z\vert^2}
+\frac{(1-\vert z\vert^2)^4}{\vert 1-z\vert^4}
-\frac{(1-\vert z\vert^2)^3}{\vert 1-z\vert^4}
+\frac{1}{2}\frac{(1-\vert z\vert^2)^5}{\vert 1-z\vert^6},
\quad z\in\D,
$$
and
$$
H_1(z)=
\frac{1}{2}\frac{(1-\vert z\vert^2)^3}{\vert 1-z\vert^2}+
\frac{1}{4}\frac{(1-\vert z\vert^2)^4}{\vert 1-z\vert^4},
\quad z\in\D
$$
(see~\cite{Olofsson05}). 
The next generation of Poisson kernels 
$F_2$ and $H_2$ is given by 
\begin{align*}
F_2(z)=&\frac{1}{2}\frac{(1-\vert z\vert^2)^4}{\vert1-z\vert^2}
+\frac{3}{2}\frac{(1-\vert z\vert^2)^5}{\vert1-z\vert^4}
-\frac{3}{2}\frac{(1-\vert z\vert^2)^4}{\vert1-z\vert^4}\\
&+\frac{3}{2}\frac{(1-\vert z\vert^2)^6}{\vert1-z\vert^6}   
-\frac{3}{2}\frac{(1-\vert z\vert^2)^5}{\vert1-z\vert^6}
+\frac{1}{2}\frac{(1-\vert z\vert^2)^7}{\vert1-z\vert^8},
\quad z\in\D,
\end{align*}
and
$$
H_2(z)=\frac{1}{2}\frac{(1-\vert z\vert^2)^4}{\vert1-z\vert^2}
+\frac{1}{2}\frac{(1-\vert z\vert^2)^5}{\vert1-z\vert^4}
-\frac{1}{4}\frac{(1-\vert z\vert^2)^4}{\vert1-z\vert^4}
+\frac{1}{6}\frac{(1-\vert z\vert^2)^6}{\vert1-z\vert^6},
\quad z\in\D
$$
(see Section~\ref{F2H2computation}). 

We have performed further calculations of Poisson kernels 
$F_\gamma$ and $H_\gamma$ along these lines.
We propose to write the third generation Poisson kernel $F_3$ as 
$$
F_3(z)=\sum_{\beta=1}^5\frac{f_\beta(\vert z\vert^2)}{\vert1-z\vert^{2\beta}},
\quad z\in\D,
$$
where $\{f_\beta\}_{\beta=1}^5$ are polynomials given by 
\begin{align*}
2f_1(x)&=(1-x)^5,\\
2f_2(x)&=4(1-x)^6-4(1-x)^5,\\
2f_3(x)&=6(1-x)^7-8(1-x)^6+2(1-x)^5,\\
2f_4(x)&=4(1-x)^8-4(1-x)^7,\\
2f_5(x)&=(1-x)^9.
\end{align*}
Notice that the columns in the above table of coefficients 
are up to normalization rows in the standard table of binomial coefficients 
(Pascal's triangle) and that each $f_\beta$ is a linear combination of 
monomials $(1-x)^k$ with exponents $k$ in the range 
$\max(2\beta-1,\gamma+2)\leq k\leq\beta+\gamma+1$; here $\gamma=3$.
The normalization factors are determined by $f_2(0)=f_3(0)=f_4(0)=0$.
We conjecture this structure to prevail for all 
integer weight powers $\gamma\geq0$ (see Conjecture~\ref{Fform}).
Similarly, the function $H_3$ has the form 
$$
H_3(z)=\sum_{\beta=1}^4\frac{h_\beta(\vert z\vert^2)}{\vert1-z\vert^{2\beta}},
\quad z\in\D,
$$
where $\{h_\beta\}_{\beta=1}^4$ are polynomials given by 
\begin{align*}
2h_1(x)&=(1-x)^5,\\
4h_2(x)&=3(1-x)^6-2(1-x)^5,\\
6h_3(x)&=3(1-x)^7-2(1-x)^6,\\
8h_4(x)&=(1-x)^8,
\end{align*}
and we conjecture similar formulas to hold true for $H_\gamma$ 
with $\gamma$ a non-negative integer (see Conjecture~\ref{Hform}).

To further illustrate Conjectures~\ref{Fform} and~\ref{Hform} 
we include here also formulas for the fourth generation 
Poisson kernels $F_4$ and $H_4$: 
$$
F_4(z)=\sum_{\beta=1}^6\frac{f_\beta(\vert z\vert^2)}{\vert1-z\vert^{2\beta}},
\quad z\in\D,
$$
where 
\begin{align*}
2f_1(x)&=(1-x)^6,\\
2f_2(x)&=5(1-x)^7-5(1-x)^6,\\
2f_3(x)&=10(1-x)^8-15(1-x)^7+5(1-x)^6,\\
2f_4(x)&=10(1-x)^9-15(1-x)^8+5(1-x)^7,\\
2f_5(x)&=5(1-x)^{10}-5(1-x)^9,\\
2f_6(x)&=(1-x)^{11},
\end{align*}
and 
$$
H_4(z)=\sum_{\beta=1}^5\frac{h_\beta(\vert z\vert^2)}{\vert1-z\vert^{2\beta}},
\quad z\in\D,
$$
where 
\begin{align*}
2h_1(x)&=(1-x)^6,\\
4h_2(x)&=4(1-x)^7-3(1-x)^6,\\
6h_3(x)&=6(1-x)^8-6(1-x)^7+(1-x)^6,\\
8h_4(x)&=4(1-x)^9-3(1-x)^8,\\
10h_5(x)&=(1-x)^{10}.
\end{align*}

In total we have verified Conjectures~\ref{Fform} and~\ref{Hform}
for integer weight parameters $\gamma$ 
in the range $0\leq\gamma\leq80$ using computer calculations 
(see Section~\ref{expectedFHformulas}).
In this way we have now available explicit formulas for 
$F_\gamma$ and $H_\gamma$ for considerably more weight parameters 
than was previously known.

Our initial method of calculation of $F_\gamma$ and $H_\gamma$'s 
breaks down into two parts and might be of interest in other contexts as well: 
the construction of nontrivial $w_\gamma$-biharmonic functions with 
prescribed $\delta_1$-type singularities on 
the boundary $\T=\partial\D$ and then to compute 
the appropriate normalizations of the functions constructed.
The nontrivial $w_\gamma$-biharmonic functions are found using 
a method of decoupling of Laplacians described in 
Section~\ref{decouplingLaplacians}. 
The appropriate normalizations are found analyzing 
the distributional boundary behavior
of building block functions of the form 
$u(z)=(1-\lvert z\rvert^2)^{2\beta-1}/\lvert1-z\rvert^{2\beta}$ 
which we discuss in 
Section~\ref{boundarybehaviorbbfcns}.
In Section~\ref{F2H2computation} 
we compute $F_2$ and $H_2$ using this method of calculation.

We wish to mention also that Conjectures~\ref{Fform} and~\ref{Hform}
imply certain $L^1$-bounds for $F_\gamma$ and $H_\gamma$. 
These $L^1$-bounds have as consequence a regularity property that 
the Dirichlet problem \eqref{Dirichletproblem} 
can be solved for boundary data $f_j\in B_j$ ($j=0,1$)
in certain admissible pairs of  homogeneous Banach spaces $B_j$ ($j=0,1$) 
with boundary values evaluated in the norms of these spaces 
$$
\lim_{r\to1}u_r=f_0\quad\text{in}\ B_0\quad\text{and}\quad
\lim_{r\to1}(u_r-f_0)/(1-r)=f_1\quad\text{in}\ B_1
$$
(see Section~\ref{furtherresults}). 

The keyword fourth order equations brings to mind 
a mathematical study of properties of materials. 
A different depart of interest for study of weighted biharmonic operators 
of the form $\Delta w^{-1}\Delta$ comes from Bergman space theory 
where related potential theory 
has been used to study properties 
of factorization and approximation of analytic functions subject to 
area integrability constraints (see~\cite{ARS,DKSS,HKZ,O04}). 
A significant contribution in this direction 
is the paper Hedenmalm, 
Jakobsson and Shimorin~\cite{HJS} on biharmonic maximum principles; 
predecessors of this work are Hedenmalm~\cite{H94} and Shimorin~\cite{Shimorin}.
Recently, biharmonic Bergman space potential theory 
has been applied by Hedenmalm, Shimorin and others 
in the study of Hele-Shaw flows and related problems 
of differential geometric nature (see~\cite{HO,HP,HS}). 
In recent work the author has indicated how related methods 
can be used to develop a generalized systems theory for 
weighted Bergman space norms 
(see~\cite{O06-2,Osystem,Oexpansive}).

\section{Boundary behavior of building block functions}\label{boundarybehaviorbbfcns}

\setcounter{equation}{0}
\setcounter{thm}{0}
\setcounter{prop}{0}
\setcounter{lemma}{0}
\setcounter{cor}{0}
\setcounter{remark}{0}

Functions of the form $u(z)=(1-\vert z\vert^2)^{\alpha}/\vert1-z\vert^{2\beta}$ 
appear in the formulas for Poisson kernels. 
In this section we shall discuss the distributional boundary value and 
normal derivative of such a function.
We use the standard notation 
$$
\hat{f}(k)=\frac{1}{2\pi}\int_\T f(e^{i\theta})e^{-ik\theta}d\theta,
\quad k\in\Z,
$$
for the Fourier coefficients of an integrable function $f\in L^1(\T)$ 
and similarly for distributions $f\in\De'(\T)$.

Let us consider first the standard Poisson kernel 
$$
P(z)=\frac{1-\vert z\vert^2}{\vert1-z\vert^{2}},
\quad z\in\D,
$$
for the unit disc. It is well-known that $\lim_{r\to1}P_r=\delta_1$ 
in the weak$^*$ topology of measures (distributions of order $0$). 
A straightforward computation shows that
$\partial_nP=-\kappa$ in $\De'(\T)$, where $\kappa$ is the distribution
$$
\kappa=\sum_{k=-\infty}^\infty\vert k\vert e^{ik\theta}
\quad\text{in}\ \De'(\T).
$$ 
The distribution $\kappa$ can also be described as the (tangential) 
distributional derivative of the principal value 
distribution of $\cot(\theta/2)$:
$$
\kappa=\frac{d}{d\theta}\pv\cot(\theta/2)\quad\text{in}\ \De'(\T).
$$
Recall also the so-called conjugate function distribution $\tilde{f}$ 
for $f\in\De'(\T)$ which is defined by 
$$
\tilde{f}=(\pv\cot(\theta/2))*f
=-i\sum_{k=-\infty}^\infty\sgn(k)\hat{f}(k)e^{ik\theta}
\quad\text{in}\ \De'(\T),  
$$
where $\sgn(k)=k/\vert k\vert$ for $k\neq0$ and $\sgn(0)=0$
(see~\cite{Katznelson,Zygmund}). 
In particular, the distribution $\kappa$ is of order $2$.
We mention also that 
$$
\kappa=\lim_{r\to1}2\Re(K_r)\quad\text{in}\ \De'(\T), 
$$ 
where $K(z)=z/(1-z)^2$ is the well-known Koebe function 
from conformal mapping theory.

We now consider the case $\beta\geq2$.
We denote by $\langle\cdot,\cdot\rangle$ the standard distributional pairing.

\begin{thm}\label{asymptoticbdrybehavior}
Let $\beta\geq2$ be an integer, and let
$$
u(z)=\frac{(1-\vert z\vert^2)^{2\beta-1}}{\vert1-z\vert^{2\beta}},
\quad z\in\D.
$$
Then for $\varphi\in C^2(\T)$ we have the asymptotic expansion
$$
\langle u_r,\varphi\rangle=a\varphi(1)+b(1-r)\varphi(1)+O((1-r)^2)
$$ 
as $r\to1$ for some real constants $a$ and $b$. 
In particular, the distributional boundary value 
and inward normal derivative of $u$ 
are given by 
$$
f_0=\lim_{r\to1}u_r=a\delta_1\quad\text{and}\quad
f_1=\partial_n u=
\lim_{r\to1}(u_r-f_0)/(1-r)=b\delta_1\quad\text{in}\ \De'(\T).
$$ 
\end{thm}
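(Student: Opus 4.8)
The plan is to evaluate $\langle u_r,\varphi\rangle=\frac{1}{2\pi}\int_{-\pi}^{\pi}\frac{(1-r^2)^{2\beta-1}}{\lvert1-re^{i\theta}\rvert^{2\beta}}\varphi(e^{i\theta})\,d\theta$ by isolating the mass concentrating at $\theta=0$. First I would record the elementary identity $\lvert1-re^{i\theta}\rvert^2=(1-r)^2+4r\sin^2(\theta/2)$, which makes transparent that the kernel is even in $\theta$ and lives essentially on a $\theta$-interval of width of order $1-r$. Since $\varphi\in C^2(\T)$, Taylor's theorem supplies a decomposition $\varphi(e^{i\theta})=\varphi(1)+\theta\,\dot\varphi(1)+\theta^2R(\theta)$ on $(-\pi,\pi]$, where $\dot\varphi(1)=\frac{d}{d\theta}\varphi(e^{i\theta})\big|_{\theta=0}$ and $R$ is bounded (near $\theta=0$ by the Taylor remainder, away from $0$ by compactness). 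Splitting the integral accordingly gives $\langle u_r,\varphi\rangle=\varphi(1)M(r)+\dot\varphi(1)N(r)+E(r)$, where $M(r)$, $N(r)$, $E(r)$ are the integrals of the kernel against $1$, $\theta$, $\theta^2R(\theta)$ respectively. The term $N(r)$ vanishes identically because the kernel is even and $\theta$ is odd on $(-\pi,\pi)$. This is the structural point behind the theorem: the first derivative of $\varphi$ never enters, so the $(1-r)$-coefficient is once more a multiple of $\varphi(1)$, rather than the normal derivative picking up the conjugate-function distribution $\kappa$ as it does for $\beta=1$ (the ordinary Poisson kernel).

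Next I would estimate the remainder. Using $\lvert1-re^{i\theta}\rvert^2\geq(1-r)^2+(4r/\pi^2)\theta^2$ on $\lvert\theta\rvert\leq\pi$ together with the substitution $\theta=(1-r)s$, one obtains $\lvert E(r)\rvert\leq C(1-r)^2\int_{\R}\frac{s^2\,ds}{(1+(4r/\pi^2)s^2)^\beta}$, where $C$ depends only on $\beta$ and $\lVert R\rVert_\infty$. The last integral converges exactly because $\beta\geq2$ (its integrand decays like $s^{2-2\beta}$) and it stays bounded as $r\to1$, so $E(r)=O((1-r)^2)$. This is the one place where the hypothesis $\beta\geq2$ is used: for $\beta=1$ the same integral diverges linearly in the cutoff, yielding an $O(1-r)$ contribution that genuinely depends on $\dot\varphi(1)$.

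It then remains to expand $M(r)=(1-r^2)^{2\beta-1}\cdot\frac{1}{2\pi}\int_{-\pi}^{\pi}\frac{d\theta}{\lvert1-re^{i\theta}\rvert^{2\beta}}$. Expanding $(1-re^{\pm i\theta})^{-\beta}=\sum_{m\geq0}\binom{m+\beta-1}{m}r^me^{\pm im\theta}$ and integrating term by term gives $\frac{1}{2\pi}\int_{-\pi}^{\pi}\frac{d\theta}{\lvert1-re^{i\theta}\rvert^{2\beta}}=\sum_{m\geq0}\binom{m+\beta-1}{m}^2r^{2m}$. Since $\binom{m+\beta-1}{m}^2$ is a polynomial in $m$ of degree $2\beta-2$, this series equals $P(r^2)/(1-r^2)^{2\beta-1}$ for a polynomial $P$ of degree at most $2\beta-2$, so $M(r)=P(r^2)$ is a polynomial in $r$ (Euler's hypergeometric transformation identifies it as $M(r)=\sum_{k=0}^{\beta-1}\binom{\beta-1}{k}^2r^{2k}$). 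In particular $M(r)=a+b(1-r)+O((1-r)^2)$ with $a=M(1)=\binom{2\beta-2}{\beta-1}$ and $b=-M'(1)$, both real. Combining the three contributions yields $\langle u_r,\varphi\rangle=a\varphi(1)+b(1-r)\varphi(1)+O((1-r)^2)$. Specializing to $\varphi\in C^\infty(\T)$ then gives $\lim_{r\to1}u_r=a\delta_1$ in $\De'(\T)$, and dividing $\langle u_r,\varphi\rangle-a\varphi(1)$ by $1-r$ and letting $r\to1$ gives $\partial_nu=\lim_{r\to1}(u_r-a\delta_1)/(1-r)=b\delta_1$ in $\De'(\T)$.

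The main obstacle is the uniform control of the remainder $E(r)$: carrying out the rescaling $\theta=(1-r)s$ rigorously on the truncated interval with all constants uniform for $r$ near $1$, and seeing clearly that $\beta\geq2$ is precisely the integrability threshold that upgrades the natural bound to $O((1-r)^2)$. The evaluation of $M(r)$ is routine once the collapsing identity is noticed, and the passage to the distributional statements for $f_0$ and $f_1$ is immediate from the definitions in the introduction.
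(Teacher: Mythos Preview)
Your argument is correct and follows the same overall strategy as the paper: exploit the evenness of the kernel to kill the first-order Taylor term of $\varphi$, show that the quadratic remainder contributes $O((1-r)^2)$, and then Taylor-expand the integral mean $M(r)=\hat u_r(0)$. The only difference is in how the remainder is bounded. The paper factors the kernel as
\[
(1-r^2)^2\cdot\frac{(1-r^2)^{2\beta-3}}{\lvert1-re^{i\theta}\rvert^{2\beta-2}}\cdot\frac{1}{\lvert1-re^{i\theta}\rvert^{2}},
\]
observes that the middle factor has uniformly bounded $L^1(\T)$-norm (it is the building-block kernel of order $\beta-1$), and that the last factor times the $O(\theta^2)$ symmetrized difference is uniformly bounded; this avoids any change of variable. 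You instead use the explicit lower bound $\lvert1-re^{i\theta}\rvert^2\geq(1-r)^2+(4r/\pi^2)\theta^2$ and the rescaling $\theta=(1-r)s$, reducing to the convergence of $\int_{\R}s^2(1+cs^2)^{-\beta}\,ds$ for $\beta\geq2$. Both routes use the hypothesis $\beta\geq2$ in equivalent ways. Your closed-form identification $M(r)=\sum_{k=0}^{\beta-1}\binom{\beta-1}{k}^2 r^{2k}$ and hence $a=\binom{2\beta-2}{\beta-1}$ (via Vandermonde) is a pleasant bonus; the paper records $a$ and $b$ only as the double sums of Proposition~\ref{integralmeanexpansion}, which do evaluate to the same numbers.
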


\begin{proof}
We first observe that
$$
\langle u_r,\varphi\rangle=\frac{1}{2\pi}\int_\T 
\frac{(1-r^2)^{2\beta-1}}{\lvert1-re^{i\theta}\rvert^{2\beta}}
\varphi(e^{i\theta})d\theta
=\frac{1}{2\pi}\int_\T 
\frac{(1-r^2)^{2\beta-1}}{\lvert1-re^{i\theta}\rvert^{2\beta}}
\varphi(e^{-i\theta})d\theta.
$$
We now take an average to conclude that 
$$
\langle u_r,\varphi\rangle-\hat{u}_r(0)\varphi(1)=
\frac{1}{2\pi}\int_\T 
\frac{(1-r^2)^{2\beta-1}}{\lvert1-re^{i\theta}\rvert^{2\beta}}
\Big(\frac{\varphi(e^{i\theta})+\varphi(e^{-i\theta})}{2}-\varphi(1)\Big)
d\theta,
$$
where $\hat{u}_r(0)$ is the $0$-th Fourier coefficient of the function $u_r$. 
We next rewrite this last integral as 
$$
(1-r^2)^2\frac{1}{2\pi}\int_\T 
\frac{(1-r^2)^{2\beta-3}}{\lvert1-re^{i\theta}\rvert^{2\beta-2}}
\frac{1}{\lvert1-re^{i\theta}\rvert^{2}}
\Big(\frac{\varphi(e^{i\theta})+\varphi(e^{-i\theta})}{2}-\varphi(1)\Big)
d\theta.
$$
The function within parenthesis in the integrand is 
$O(\theta^2)$ as $\theta\to0$ since $\varphi\in C^2(\T)$, 
and so the product of the last two factors is bounded in absolute value 
uniformly in $e^{i\theta}\in\T$ and $0\leq r<1$. 
We also notice that the integral means of the function    
$(1-r^2)^{2\beta-3}/\lvert1-re^{i\theta}\rvert^{2\beta-2}$ are uniformly bounded 
for $0\leq r<1$ (see Proposition~\ref{integralmeanexpansion} below).  
This proves the estimate 
$$
\vert\langle u_r,\varphi\rangle-\hat{u}_r(0)\varphi(1)\vert\leq C(1-r)^2,
\quad 0\leq r<1,
$$
where $C$ is an absolute constant.
A Taylor expansion argument now  
yields the conclusion of the theorem.
\end{proof}

We shall next compute the numbers $a$ and $b$ 
in Theorem~\ref{asymptoticbdrybehavior}.

\begin{prop}\label{integralmeanexpansion}
Let $\beta\geq2$ be an integer, 
and let $u$, $a$ and $b$ be as in Theorem~\ref{asymptoticbdrybehavior}. 
Then
$$
\frac{1}{2\pi}\int_\T u(re^{i\theta})d\theta
=a+b(1-r)+O((1-r)^2)
$$
as $r\to1$,
and the numbers $a$ and $b$ are given by the sums 
$$ 
a=\sum_{k=0}^{2\beta-2}
\sum_{j=0}^{k}(-1)^j\binom{2\beta-1}{j}\binom{k-j+\beta-1}{k-j}^2
$$
and 
$$
b=-\sum_{k=1}^{2\beta-2}
2k\sum_{j=0}^{k}(-1)^j\binom{2\beta-1}{j}\binom{k-j+\beta-1}{k-j}^2.
$$ 
\end{prop}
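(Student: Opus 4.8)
\emph{Proof proposal.}
The plan is to compute the integral mean exactly as a polynomial in $r^2$ and then re-expand it in powers of $1-r$. First I would observe that the integral mean in question is precisely the zeroth Fourier coefficient $\hat u_r(0)$ of $u_r$. Writing $\lvert1-re^{i\theta}\rvert^{2\beta}=(1-re^{i\theta})^\beta(1-re^{-i\theta})^\beta$ and using the binomial series $(1-\zeta)^{-\beta}=\sum_{m\geq0}\binom{m+\beta-1}{m}\zeta^m$ for $\lvert\zeta\rvert<1$, one multiplies the two absolutely convergent series and integrates term by term; only the diagonal terms $m=n$ survive, giving
$$
\frac{1}{2\pi}\int_\T u(re^{i\theta})\,d\theta
=(1-r^2)^{2\beta-1}\sum_{n=0}^\infty\binom{n+\beta-1}{n}^2 r^{2n},
\qquad 0\leq r<1.
$$

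Next, set $x=r^2$ and expand the polynomial factor $(1-x)^{2\beta-1}=\sum_{j=0}^{2\beta-1}(-1)^j\binom{2\beta-1}{j}x^j$. Multiplying by the power series and collecting the coefficient of $x^k$ (the rearrangement being legitimate since the power series converges absolutely for $\lvert x\rvert<1$ and the other factor is a finite sum) yields $\hat u_r(0)=\sum_{k\geq0}c_k x^k$ with $c_k=\sum_{j=0}^{\min(k,2\beta-1)}(-1)^j\binom{2\beta-1}{j}\binom{k-j+\beta-1}{k-j}^2$. The key step is to show that $c_k=0$ for $k\geq2\beta-1$. For such $k$ the index $j$ runs over the full range $0,\dots,2\beta-1$, and $\binom{k-j+\beta-1}{k-j}=\binom{k-j+\beta-1}{\beta-1}$ is, as a function of $j$, a polynomial of degree $\beta-1$; hence $\binom{k-j+\beta-1}{k-j}^2$ is a polynomial in $j$ of degree $2(\beta-1)$, which is strictly less than $2\beta-1$. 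Since $\sum_{j=0}^{N}(-1)^j\binom{N}{j}Q(j)=0$ for every polynomial $Q$ of degree $<N$ (the $N$-th finite difference of a polynomial of lower degree vanishes), it follows that $c_k=0$. Equivalently, one may invoke Euler's transformation ${}_2F_1(\beta,\beta;1;x)=(1-x)^{1-2\beta}{}_2F_1(1-\beta,1-\beta;1;x)$, which shows directly that $\hat u_r(0)=\sum_{n=0}^{\beta-1}\binom{\beta-1}{n}^2 x^n$ is a polynomial.

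Consequently $\hat u_r(0)=\sum_{k=0}^{2\beta-2}c_k r^{2k}$ is a polynomial in $r^2$; in particular the integral means are uniformly bounded on $0\leq r<1$, which is the fact used in the proof of Theorem~\ref{asymptoticbdrybehavior} (there is no circularity, this boundedness being established here independently of that theorem). Finally I would substitute $r^2=1-2(1-r)+(1-r)^2$, so that $r^{2k}=1-2k(1-r)+O((1-r)^2)$ as $r\to1$, and collect orders to obtain
$$
\frac{1}{2\pi}\int_\T u(re^{i\theta})\,d\theta
=\sum_{k=0}^{2\beta-2}c_k-\Big(\sum_{k=1}^{2\beta-2}2k\,c_k\Big)(1-r)+O((1-r)^2).
$$
Since $\frac{1}{2\pi}\int_\T u_r\,d\theta=\langle u_r,1\rangle$, these leading coefficients coincide with the constants $a$ and $b$ of Theorem~\ref{asymptoticbdrybehavior}, and inserting the formula for $c_k$ (noting that for $k\leq2\beta-2$ the inner sum runs to $j=k$) gives the stated expressions for $a$ and $b$. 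The only real obstacle is the vanishing $c_k=0$ for $k\geq2\beta-1$; everything else is bookkeeping with binomial series and a Taylor expansion.
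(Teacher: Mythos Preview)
Your proof is correct and follows essentially the same route as the paper: compute the integral mean via the binomial series (the paper phrases this as Parseval), multiply out to obtain the coefficients $c_k$, kill $c_k$ for $k\geq2\beta-1$ by a finite-difference argument on a polynomial of degree $2(\beta-1)$, and then Taylor-expand $r^{2k}$ about $r=1$. Your remark on circularity and the alternative via Euler's transformation are pleasant additions, but the substance of the argument is the same as the paper's.
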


\begin{proof}
Putting $\varphi=1$ in Theorem~\ref{asymptoticbdrybehavior} 
yields the asymptotic expansion for the integral means of $u$.
It remains to prove the last equalities for $a$ and $b$.
Recall the standard power series expansion 
$$
\frac{1}{(1-z)^\beta}=\sum_{k\geq0}\binom{k+\beta-1}{k}z^k,
\quad z\in\D.
$$ 
By the Parseval formula we have that 
\begin{align*}
\frac{1}{2\pi}\int_\T 
\frac{(1-r^2)^{2\beta-1}}{\vert1-re^{i\theta}\vert^{2\beta}}d\theta
&=(1-r^2)^{2\beta-1}\sum_{k\geq0}\binom{k+\beta-1}{k}^2r^{2k}\\
&=\sum_{k\geq0}\Big(\sum_{j=0}^{\min(2\beta-1,k)}
(-1)^j\binom{2\beta-1}{j}\binom{k-j+\beta-1}{k-j}^2\Big)r^{2k}
\end{align*}
for $0\leq r<1$. Notice that the binomial coefficient
$$
\binom{k+\beta-1}{k}=\frac{1}{(\beta-1)!}\prod_{j=1}^{\beta-1}(k+j)
$$  
is a polynomial in $k$ of degree $\beta-1$. 
By a backward difference argument we have 
$$
\sum_{j=0}^{\min(2\beta-1,k)}
(-1)^j\binom{2\beta-1}{j}\binom{k-j+\beta-1}{k-j}^2=0
$$
for $k\geq2\beta-1$, showing that 
\begin{equation}\label{integralmeansformula}
\frac{1}{2\pi}\int_\T u(re^{i\theta})d\theta
=\sum_{k=0}^{2\beta-2}\Big(\sum_{j=0}^{k}
(-1)^j\binom{2\beta-1}{j}\binom{k-j+\beta-1}{k-j}^2\Big)r^{2k}
\end{equation}
for $0\leq r<1$. 
In particular, this last sum is a polynomial in $r^2$  
of degree at most $2\beta-2$. 
By a Taylor expansion argument we arrive at the formulas for  
$a$ and $b$ in the proposition. 
\end{proof}

\begin{remark}
We remark that the integral means of $u$ are given by 
formula \eqref{integralmeansformula} for $0\leq r<1$.
\end{remark}

\begin{remark}\label{asymptoticssums}
Denote by $a_\beta$ and $b_\beta$ the numbers $a$ and $b$ 
in Proposition~\ref{integralmeanexpansion} for a given integer $\beta\geq2$. 
A computation shows that the first few of these numbers are given by  
$$
(a_2,b_2)=(2,-2),\quad (a_3,b_3)=(6,-12)\quad\text{and}\quad(a_4,b_4)=(20,-60).
$$
\end{remark}

\begin{remark}
It is easy to see that the set of functions of the form 
$u(z)=(1-\vert z\vert^2)^{\alpha}/\vert1-z\vert^{2\beta}$, 
where $\alpha,\beta\geq1$, 
are linearly independent. Indeed, if 
$$
\sum_{\alpha,\beta\geq1}c_{\alpha\beta}
\frac{(1-\vert z\vert^2)^{\alpha}}{\vert1-z\vert^{2\beta}}=0,\quad z\in\D,
$$ 
then 
$\sum_{\alpha,\beta\geq1}c_{\alpha\beta}s^\alpha t^\beta=0$ 
for every point $(s,t)\in\R^2$ 
of the form $s=1-\vert z\vert^2$, $t=\vert1-z\vert^{-2}$, where $z\in\D$, 
which implies that $c_{\alpha\beta}=0$ for all $\alpha,\beta\geq1$.
\end{remark}

\section{Decoupling of Laplacians}\label{decouplingLaplacians}

\setcounter{equation}{0}
\setcounter{thm}{0}
\setcounter{prop}{0}
\setcounter{lemma}{0}
\setcounter{cor}{0}
\setcounter{remark}{0}

We shall describe in this section a method of decoupling of Laplacians 
that we have used to compute $F_\gamma$ and $H_\gamma$'s.
For notational reasons
we introduce the ordinary differential operators
$$
P_\beta=P_\beta(x,\frac{d}{dx})=(1-\beta)\frac{d}{dx}+x\frac{d^2}{dx^2}
$$
and
$$
Q_\beta=Q_\beta(x,\frac{d}{dx})=\beta\Big(\beta+(1-x)\frac{d}{dx}\Big)
$$
for $\beta\geq1$. 

\begin{lemma}\label{Lplcomp}
Let $u$ be a function of the form 
$$
u(z)=\frac{f(\vert z\vert^2)}{\vert1-z\vert^{2\beta}},\quad z\in\D,
$$
where $f$ is $C^2$-smooth on the interval $[0,1)$ and $\beta\geq1$.
Then
$$
\Delta u(z)=\frac{(P_\beta f)(\vert z\vert^2)}{\vert1-z\vert^{2\beta}}+
\frac{(Q_\beta f)(\vert z\vert^2)}{\vert1-z\vert^{2(\beta+1)}},
\quad z\in\D.
$$
\end{lemma}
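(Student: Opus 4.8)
The plan is a direct computation in Wirtinger calculus, writing everything in terms of $z$ and $\bar z$. First I would write
$$
u(z)=f(z\bar z)\,(1-z)^{-\beta}(1-\bar z)^{-\beta}
$$
and compute $\partial_{\bar z}u$ by the product rule, using $\partial_{\bar z}f(z\bar z)=zf'(z\bar z)$, $\partial_{\bar z}(1-\bar z)^{-\beta}=\beta(1-\bar z)^{-\beta-1}$, and $\partial_{\bar z}(1-z)^{-\beta}=0$. Differentiating once more with respect to $z$ (now using $\partial_z f(z\bar z)=\bar z f'(z\bar z)$, $\partial_z(1-z)^{-\beta}=\beta(1-z)^{-\beta-1}$, $\partial_z(1-\bar z)^{-\beta}=0$) yields, after expanding, four contributions: a term $(|z|^2f''(|z|^2)+f'(|z|^2))|1-z|^{-2\beta}$, a term $\beta^2 f(|z|^2)|1-z|^{-2\beta-2}$, and two "mixed" terms of the shape $\beta f'(|z|^2)\,z\,(1-z)^{-\beta-1}(1-\bar z)^{-\beta}$ together with its complex conjugate counterpart.

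The only real point is to see that these two mixed terms reassemble into the claimed radial form. Factoring $(1-z)^{-\beta-1}(1-\bar z)^{-\beta-1}$ out of their sum leaves $z(1-\bar z)+\bar z(1-z)=z+\bar z-2z\bar z$, and the identity $z+\bar z-2z\bar z=(1-|z|^2)-|1-z|^2$, immediate from $|1-z|^2=1-z-\bar z+|z|^2$, splits the mixed contribution into the piece $\beta f'(|z|^2)(1-|z|^2)|1-z|^{-2\beta-2}$ and the piece $-\beta f'(|z|^2)|1-z|^{-2\beta}$. Thus the off-diagonal factors $z+\bar z$ are traded for a function of $|z|^2$ and $|1-z|^2$, which is exactly what makes the output of the form $g(|z|^2)/|1-z|^{2\beta}+h(|z|^2)/|1-z|^{2(\beta+1)}$.

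Collecting the coefficient of $|1-z|^{-2\beta}$ then gives $xf''(x)+(1-\beta)f'(x)=(P_\beta f)(x)$ with $x=|z|^2$, and collecting the coefficient of $|1-z|^{-2\beta-2}$ gives $\beta(1-x)f'(x)+\beta^2 f(x)=\beta(\beta+(1-x)\tfrac{d}{dx})f(x)=(Q_\beta f)(x)$, which is the asserted formula. The main obstacle is purely organizational: correctly applying the chain rule to $f(z\bar z)$ and keeping the holomorphic and antiholomorphic factors separate through two differentiations, plus spotting the elementary identity above. There is no analytic subtlety, since $f\in C^2[0,1)$ makes all the derivatives involved classical on $\D$ and the computation valid pointwise.
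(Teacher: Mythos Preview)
Your argument is correct and is essentially the same direct Wirtinger computation as the paper's proof: the paper differentiates $|1-z|^{-2\beta}$ as a single function (picking up factors $(z-1)$, $(\bar z-1)$) and ends up with a term $2\Re(z)\,|1-z|^{-2(\beta+1)}$ handled via $2\Re(z)=1+|z|^2-|1-z|^2$, while you split $|1-z|^{-2\beta}=(1-z)^{-\beta}(1-\bar z)^{-\beta}$ and arrive at the equivalent identity $z+\bar z-2|z|^2=(1-|z|^2)-|1-z|^2$. The organization differs only cosmetically.
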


\begin{proof}
For the sake of completeness we include some details of proof.
Differentiating we find that
\begin{align*}
\frac{\partial u}{\partial\bar z}(z)&=
f'(\vert z\vert^2)z\frac{1}{\vert1-z\vert^{2\beta}}
+f(\vert z\vert^2)(-\beta)(\vert1-z\vert^2)^{-\beta-1}
\frac{\partial}{\partial\bar z}(\vert1-z\vert^2)\\
&=f'(\vert z\vert^2)z\frac{1}{\vert1-z\vert^{2\beta}}+
f(\vert z\vert^2)(-\beta)\frac{1}{\vert1-z\vert^{2(\beta+1)}}(z-1).
\end{align*}
By another differentiation we have that
\begin{align*}
\frac{\partial^2 u}{\partial z\partial\bar z}(z)=&
(f'(\lvert z\rvert^2)+\lvert z\rvert^2f''(\lvert z\rvert^2))
\frac{1}{\lvert1-z\rvert^{2\beta}}\\
&+\beta(\beta f(\lvert z\rvert^2)-2\lvert z\rvert^2f'(\lvert z\rvert^2))
\frac{1}{\lvert1-z\rvert^{2(\beta+1)}}\\
&+\beta f'(\lvert z\rvert^2)2\Re(z)\frac{1}{\lvert1-z\rvert^{2(\beta+1)}},
\end{align*}
where $\Re(z)$ denotes the real part of the complex number $z$.
We now use the formula $2\Re(z)=1+\vert z\vert^2-\vert1-z\vert^2$ 
to rewrite this last sum as
\begin{align*}
\Delta u(z)=&
\Big((1-\beta)f'(\vert z\vert^2))+\vert z\vert^2f''(\vert z\vert^2)\Big)
\frac{1}{\vert1-z\vert^{2\beta}}\\ &+
\beta\Big(\beta f(\vert z\vert^2)+(1-\vert z\vert^2)f'(\vert z\vert^2)\Big)
\frac{1}{\vert1-z\vert^{2(\beta+1)}},\quad z\in\D.
\end{align*}
This completes the proof of the lemma.
\end{proof}

Let $w:\D\to(0,\infty)$ be a smooth radial weight function and write
\begin{equation}\label{tildeweight} 
w(z)=\tilde w(\lvert z\rvert^2),\quad z\in\D.
\end{equation}
Our construction of nontrivial $w$-biharmonic functions in 
Section~\ref{F2H2computation} use systems 
of functions $\{f_\beta\}_{\beta\geq1}$ in, say, $C^4[0,1)$ 
satisfying the system of ordinary differential equations
\begin{align}
&P_1\tilde w^{-1} P_1f_1=0,\label{P1P1eq}\\
&(P_2\tilde w^{-1}Q_1+Q_1\tilde w^{-1}P_1)f_1+P_2\tilde w^{-1}P_2f_2=0
\label{f1f2eq}
\end{align}
and 
\begin{align}
\label{fbreceq}
Q_{\beta+1}\tilde w^{-1}Q_\beta f_\beta +
(P_{\beta+2}\tilde w^{-1}Q_{\beta+1}&+Q_{\beta+1}\tilde w^{-1}P_{\beta+1})f_{\beta+1}\\
&+P_{\beta+2}\tilde w^{-1}P_{\beta+2}f_{\beta+2}=0\notag
\end{align}
for $\beta\geq1$.

\begin{prop}\label{constructionbiharmonicfcn}
Let  $\{f_\beta\}_{\beta\geq1}$ be a system of smooth functions 
on $[0,1)$ satisfying 
equations \eqref{P1P1eq}, \eqref{f1f2eq} and \eqref{fbreceq} for $\beta\geq1$, 
and assume that the sum  
\begin{equation}\label{ufcn}
u(z)=\sum_{\beta\geq1}\frac{f_\beta(\vert z\vert^2)}{\vert1-z\vert^{2\beta}},
\quad z\in\D,
\end{equation}
is suitably convergent.
Then $\Delta w^{-1}\Delta u=0$ in $\D$.
\end{prop}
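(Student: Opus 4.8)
The plan is to apply Lemma~\ref{Lplcomp} twice and then match the resulting coefficients with equations \eqref{P1P1eq}, \eqref{f1f2eq} and \eqref{fbreceq}. First I would note that each summand in \eqref{ufcn} is as in Lemma~\ref{Lplcomp}, so that (granting the convergence hypothesis, which lets me differentiate termwise)
$$
\Delta u(z)=\sum_{\beta\geq1}\left(\frac{(P_\beta f_\beta)(\vert z\vert^2)}{\vert1-z\vert^{2\beta}}+\frac{(Q_\beta f_\beta)(\vert z\vert^2)}{\vert1-z\vert^{2(\beta+1)}}\right),\quad z\in\D.
$$
Shifting the index in the second family of terms ($\beta\mapsto\beta-1$) and collecting by powers of $\vert1-z\vert^{-2}$, this becomes $\Delta u(z)=\sum_{\beta\geq1}g_\beta(\vert z\vert^2)/\vert1-z\vert^{2\beta}$ with $g_1=P_1f_1$ and $g_\beta=P_\beta f_\beta+Q_{\beta-1}f_{\beta-1}$ for $\beta\geq2$.

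Next, since $w^{-1}(z)=\tilde w^{-1}(\vert z\vert^2)$ by \eqref{tildeweight}, multiplying through gives $w^{-1}\Delta u(z)=\sum_{\beta\geq1}(\tilde w^{-1}g_\beta)(\vert z\vert^2)/\vert1-z\vert^{2\beta}$, and a second application of Lemma~\ref{Lplcomp}, followed by the same shift and collecting, yields
$$
\Delta w^{-1}\Delta u(z)=\frac{(P_1\tilde w^{-1}g_1)(\vert z\vert^2)}{\vert1-z\vert^2}+\sum_{\beta\geq2}\frac{(P_\beta\tilde w^{-1}g_\beta+Q_{\beta-1}\tilde w^{-1}g_{\beta-1})(\vert z\vert^2)}{\vert1-z\vert^{2\beta}}.
$$
By the linear independence of the functions $(1-\vert z\vert^2)^\alpha/\vert1-z\vert^{2\beta}$ noted in Section~\ref{boundarybehaviorbbfcns} (or simply by inspecting the coefficients as polynomials in $\vert z\vert^2$ against powers of $\vert1-z\vert^{-2}$), it then suffices to check that each coefficient vanishes. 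For $\beta=1$ this is $P_1\tilde w^{-1}P_1f_1=0$, i.e.\ \eqref{P1P1eq}. For $\beta=2$, substituting $g_1$ and $g_2$ and expanding gives $P_2\tilde w^{-1}P_2f_2+(P_2\tilde w^{-1}Q_1+Q_1\tilde w^{-1}P_1)f_1=0$, i.e.\ \eqref{f1f2eq}. For $\beta\geq3$, substituting $g_\beta=P_\beta f_\beta+Q_{\beta-1}f_{\beta-1}$ and $g_{\beta-1}=P_{\beta-1}f_{\beta-1}+Q_{\beta-2}f_{\beta-2}$ and sorting the terms by $f_{\beta-2}$, $f_{\beta-1}$, $f_\beta$ reproduces precisely \eqref{fbreceq} after the relabelling $\beta\mapsto\beta-2$. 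So the three displayed families of equations are exactly what is needed, and conversely.

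The only genuine point requiring care is the interchange of $\Delta$ (applied twice) with the possibly infinite sum in \eqref{ufcn}; this is what the phrase ``suitably convergent'' is meant to encode — for instance, local uniform convergence in $\D$ of the series together with those obtained by applying $\partial/\partial z$, $\partial/\partial\bar z$ and $\Delta$ termwise. In the applications of Section~\ref{F2H2computation} the sums are finite and no issue arises. Granting this, the remainder is pure index bookkeeping of the kind sketched above, and the proof is complete.
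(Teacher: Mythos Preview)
Your argument is correct and follows essentially the same route as the paper: apply Lemma~\ref{Lplcomp} once, regroup by powers of $\lvert1-z\rvert^{-2}$, multiply by $\tilde w^{-1}$, apply Lemma~\ref{Lplcomp} again, and identify the resulting coefficients with the left-hand sides of \eqref{P1P1eq}, \eqref{f1f2eq} and \eqref{fbreceq}. One small remark: the appeal to linear independence is not needed for the direction actually stated in the proposition (you only need that the hypotheses force every coefficient to vanish, hence the sum is zero); it would matter only for the converse you allude to at the end, which the proposition does not claim.
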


\begin{proof}
Let $u$ be a function of the form \eqref{ufcn} and assume that 
the sum in \eqref{ufcn} is convergent in the sense of distributions in $\D$. 
Computing the Laplacian of $u$ using Lemma~\ref{Lplcomp} we have that
\begin{align*}
\Delta u(z)&=\sum_{\beta\geq1}\Big(
\frac{P_\beta f_\beta(\vert z\vert^2)}{\vert1-z\vert^{2\beta}}+  
\frac{Q_\beta f_\beta(\vert z\vert^2)}{\vert1-z\vert^{2(\beta+1)}}
\Big)\\
&=\frac{P_1f_1(\vert z\vert^2)}{\vert1-z\vert^2}+
\sum_{\beta\geq2}\frac{P_\beta f_\beta(\vert z\vert^2)
+Q_{\beta-1}f_{\beta-1}(\vert z\vert^2)}{\vert1-z\vert^{2\beta}}.
\end{align*}
Applying another Laplacian and rearranging terms we arrive at the formula
\begin{align*}
\Delta w^{-1}\Delta u(z)&=
\frac{P_1\tilde w^{-1}P_1f_1(\vert z\vert^2)}{\vert1-z\vert^2} \\
&\quad+
\frac{P_2\tilde w^{-1}P_2f_2(\vert z\vert^2) 
+(P_2\tilde w^{-1}Q_1+
Q_1\tilde w^{-1}P_1)f_1(\vert z\vert^2)}{\vert1-z\vert^4}\\
&+
\sum_{\beta\geq3}\Big(
P_\beta\tilde w^{-1}P_\beta f_\beta(\vert z\vert^2)
+(P_{\beta}\tilde w^{-1}Q_{\beta-1}+
Q_{\beta-1}\tilde w^{-1}P_{\beta-1})f_{\beta-1}(\vert z\vert^2)\\
&\quad+
Q_{\beta-1}\tilde w^{-1}Q_{\beta-2}f_{\beta-2}(\vert z\vert^2)\Big)
\frac{1}{\vert1-z\vert^{2\beta}}.
\end{align*}
We now conclude that $\Delta w^{-1}\Delta u=0$ in $\D$ if 
$\{f_\beta\}_{\beta\geq1}$ 
satisfies the system \eqref{P1P1eq}-\eqref{fbreceq}
of differential equations. 
\end{proof}

Equations \eqref{P1P1eq}-\eqref{fbreceq} also have 
a certain structure of a Jacobi matrix.
We introduce the operations $P$ and $Q$ operating  
on sequences $f=\{f_\beta\}_{\beta\geq1}$ of smooth functions by 
$$
Pf=\{P_\beta f_\beta\}_{\beta\geq1}
$$
and
$$
Qf=\{Q_{\beta-1}f_{\beta-1}\}_{\beta\geq1},
$$
where $Q_0f_{0}=0$. In this terminology the Laplacian corresponds to the operation 
$$
(P+Q)f=\{P_\beta f_\beta+Q_{\beta-1}f_{\beta-1}\}_{\beta\geq1}
$$
(see Lemma~\ref{Lplcomp}), 
and equations \eqref{P1P1eq}-\eqref{fbreceq} can be written
$$
(P+Q)\tilde w^{-1}(P+Q)f=0.
$$
Notice that the operation $P+Q$ takes the form 
$$
(P+Q)f=\left[\begin{matrix} P_1 & & \\ Q_1 & P_2 & \\ &\ddots & \ddots \end{matrix}\right]
\left[\begin{matrix} f_1\\ f_2\\  \vdots \end{matrix}\right]
$$  
using standard block matrix notation.

We write 
$$
\tilde w_\gamma(x)=(1-x)^\gamma,\quad x\in[0,1),
$$ 
where $\gamma>-1$, 
in accordance with \eqref{tildeweight}.

For easy reference we record the following lemma.

\begin{lemma}\label{diffopmonomial}
Let $k$ be a non-negative integer.  
Then
\begin{align*}
&Q_{\beta+1}\tilde w_\gamma^{-1}Q_\beta(1-x)^k
=\beta(\beta+1)(\beta-k)(\beta+\gamma+1-k)(1-x)^{k-\gamma},\\
&(P_{\beta+1}\tilde w_\gamma^{-1}Q_\beta+Q_\beta\tilde w_\gamma^{-1}P_\beta)(1-x)^k
=\beta(\beta+\gamma+1-k)(\beta-k)(2k-\gamma)(1-x)^{k-\gamma-1}\\
&\quad +
\beta(k(k-1)(\beta+\gamma+2-k)+(\beta-k)(k-\gamma)(k-\gamma-1))(1-x)^{k-\gamma-2}
\end{align*}
and
\begin{align*}
P_{\beta} \tilde w_\gamma^{-1}P_\beta(1-x)^k
&=k(\beta-k)(k-\gamma-1)(\beta+\gamma+1-k)(1-x)^{k-\gamma-2}\\
&\quad+k(k-\gamma-2)\Big((\beta-k)(k-\gamma-1)\\
&\qquad
+(\beta-1)(k-1)-(k-1)(k-\gamma-3)\Big)(1-x)^{k-\gamma-3}\\
&\quad+k(k-1)(k-\gamma-2)(k-\gamma-3)(1-x)^{k-\gamma-4}.
\end{align*}
\end{lemma}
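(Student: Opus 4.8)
The plan is to reduce the whole lemma to two elementary monomial computations and then compose them. First I would record the action of the basic operators on a single power $(1-x)^m$: using $\frac{d}{dx}(1-x)^m=-m(1-x)^{m-1}$, $\frac{d^2}{dx^2}(1-x)^m=m(m-1)(1-x)^{m-2}$, and rewriting the coefficient $x$ in $P_\beta$ as $x=1-(1-x)$, one gets
$$
Q_\beta(1-x)^m=\beta(\beta-m)(1-x)^m,\qquad
P_\beta(1-x)^m=m(m-1)(1-x)^{m-2}+m(\beta-m)(1-x)^{m-1}.
$$
So $Q_\beta$ is just multiplication by the scalar $\beta(\beta-m)$ on the power $(1-x)^m$, while $P_\beta$ splits $(1-x)^m$ into two powers lowered by one and by two. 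Multiplication by $\tilde w_\gamma^{-1}=(1-x)^{-\gamma}$ shifts the exponent down by $\gamma$. Every formula in the lemma then drops out by applying these three rules in the prescribed order and collecting like powers of $(1-x)$.

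For the first identity I would apply $Q_\beta$ to get $\beta(\beta-k)(1-x)^k$, shift the exponent down by $\gamma$, and apply $Q_{\beta+1}$, which multiplies by $\beta(\beta+1)$ and by $\bigl(\beta+1-(k-\gamma)\bigr)=\beta+\gamma+1-k$; this is exactly the stated expression. For $P_\beta\tilde w_\gamma^{-1}P_\beta(1-x)^k$ I would compute $P_\beta(1-x)^k$ by the rule above, shift both resulting monomials down by $\gamma$, apply $P_\beta$ to each, and gather the three powers $(1-x)^{k-\gamma-2}$, $(1-x)^{k-\gamma-3}$, $(1-x)^{k-\gamma-4}$. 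The top and bottom coefficients come out immediately as $k(\beta-k)(k-\gamma-1)(\beta+\gamma+1-k)$ and $k(k-1)(k-\gamma-2)(k-\gamma-3)$. The middle coefficient emerges naturally in the symmetric form $k(k-\gamma-2)\bigl[(k-1)(\beta-k+\gamma+2)+(\beta-k)(k-\gamma-1)\bigr]$, and a one-line rearrangement using $(k-1)(\beta-k+\gamma+2)=(\beta-1)(k-1)-(k-1)(k-\gamma-3)$ rewrites it in the asymmetric shape appearing in the statement.

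For the mixed operator I would handle the two summands separately. In $P_{\beta+1}\tilde w_\gamma^{-1}Q_\beta(1-x)^k$ the inner $Q_\beta$ produces $\beta(\beta-k)(1-x)^k$; after the $\gamma$-shift, $P_{\beta+1}$ yields $\beta(\beta-k)(k-\gamma)(k-\gamma-1)(1-x)^{k-\gamma-2}+\beta(\beta-k)(k-\gamma)(\beta+\gamma+1-k)(1-x)^{k-\gamma-1}$. In $Q_\beta\tilde w_\gamma^{-1}P_\beta(1-x)^k$ the inner $P_\beta$ gives two powers, which after the $\gamma$-shift are multiplied by $Q_\beta$ monomial-by-monomial, producing $\beta k(k-1)(\beta+\gamma+2-k)(1-x)^{k-\gamma-2}+\beta k(\beta-k)(\beta+\gamma+1-k)(1-x)^{k-\gamma-1}$. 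Adding the two, the coefficient of $(1-x)^{k-\gamma-1}$ factors as $\beta(\beta-k)(\beta+\gamma+1-k)\bigl[(k-\gamma)+k\bigr]=\beta(\beta-k)(\beta+\gamma+1-k)(2k-\gamma)$, and the coefficient of $(1-x)^{k-\gamma-2}$ is precisely $\beta\bigl(k(k-1)(\beta+\gamma+2-k)+(\beta-k)(k-\gamma)(k-\gamma-1)\bigr)$, as claimed.

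There is no genuine obstacle: the whole proof is the two monomial rules plus disciplined bookkeeping of exponent shifts and coefficient collection. The single spot requiring a moment's care is matching the asymmetric middle coefficient in the third identity against the symmetric expression that the computation produces; everything else is mechanical.
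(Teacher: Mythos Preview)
Your proposal is correct and follows exactly the approach the paper intends: the paper's own proof reads in its entirety ``Straightforward computation. We omit the details,'' and what you have written is precisely those omitted details carried out via the monomial rules $Q_\beta(1-x)^m=\beta(\beta-m)(1-x)^m$ and $P_\beta(1-x)^m=m(m-1)(1-x)^{m-2}+m(\beta-m)(1-x)^{m-1}$ together with the exponent shift by $\tilde w_\gamma^{-1}$. Your verification of the middle coefficient in the third identity via $(k-1)(\beta-k+\gamma+2)=(\beta-1)(k-1)-(k-1)(k-\gamma-3)$ is also correct.
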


\begin{proof}
Straightforward computation. We omit the details.
\end{proof}

We remark that the function 
$f(x)=(1-x)^{\beta+\gamma+1}$ satisfies 
the equation 
$$
Q_{\beta+1}\tilde w_\gamma^{-1}Q_\beta f=0.
$$
Terms of this type appear in formulas for 
biharmonic Poisson kernels $F_\gamma$ and $H_\gamma$.  

We shall next comment on how we choose leading and first terms 
in our calculation of nontrivial solutions for
\eqref{P1P1eq}-\eqref{fbreceq} with $\tilde w=\tilde w_\gamma$.
The function $f_1$ is chosen of the form 
$$
f_1(x)=c_1(1-x)^{\gamma+2},
$$
which ensures that \eqref{P1P1eq} is satisfied. 
Notice that with this choice of $f_1$ we have 
$Q_2\tilde w_\gamma^{-1}Q_1f_1=0$,
and that equation \eqref{fbreceq} for $\beta=1$ simplifies to 
\begin{equation}\label{fbreceqsimpl}
(P_{3}\tilde w_\gamma^{-1}Q_{2}+Q_{2}\tilde w_\gamma^{-1}P_{2})f_{2}
+P_{3}\tilde w_\gamma^{-1}P_{3}f_{3}=0.
\end{equation}
This choice of $f_1$ is motivated as follows.
The function $f_1$ must satisfy \eqref{P1P1eq}, that is,
$$
x\frac{d^2}{dx^2}\tilde w_\gamma^{-1}x\frac{d^2}{dx^2}f_1=0.
$$
We further want this function $f_1$ to be smooth in $[0,1)$ 
which leaves us to the possibility that $f_1$ has the form 
$$
f_1(x)=c_1(1-x)^{\gamma+2}+ax+b,
$$
where $c_1$, $a$ and $b$ are constants. 
We also want the function $f_1(\vert z\vert^2)/\vert1-z\vert^2$ 
to have distributional boundary value and normal derivative 
equal to constant multiples of $\delta_1$ which forces $a=b=0$
(see Section~\ref{boundarybehaviorbbfcns}). 
This leaves us to the only possibility that 
$f_1(x)=c_1(1-x)^{\gamma+2}$.

Let us now discuss how we choose leading terms. 
Let $\beta_0\geq1$ be such that $f_{\beta_0}$ is not identically zero 
and $f_{\beta}=0$ for $\beta>\beta_0$. 
Then by \eqref{fbreceq} for $\beta=\beta_0$ we must have   
$$
Q_{\beta_0+1}\tilde w_\gamma^{-1}Q_{\beta_0}f_{\beta_0}=0,
$$
which gives that $f_{\beta_0}$ must be of the form 
$$
f_{\beta_0}(x)=c(1-x)^{\beta_0+\gamma+1}+a(1-x)^{\beta_0}
$$
for some constants $c$ and $a$.
Suppose further that we want 
the leading term 
$f_{\beta_0}(\vert z\vert^2)/\vert1-z\vert^{2\beta_0}$ 
to have boundary value equal to a nonzero constant multiple of $\delta_1$
and normal derivative equal to a constant multiple of $\delta_1$.
Then by results from Section~\ref{boundarybehaviorbbfcns} 
we must have $a=0$ and 
$\beta_0+\gamma+1=2\beta_0-1$, that is, 
we set  $\beta_0=\gamma+2$ and $f_{\gamma+2}(x)=c(1-x)^{2\gamma+3}$. 
 
Similarly, if we want the term 
$f_{\beta_0}(\vert z\vert^2)/\vert1-z\vert^{2\beta_0}$
to have vanishing boundary value and normal derivative 
equal to a nonzero constant multiple of $\delta_1$, 
then we choose $a=0$ and $\beta_0$ such that $2\beta_0=\beta_0+\gamma+1$, 
that is, we set $\beta_0=\gamma+1$ and $f_{\gamma+1}(x)=c(1-x)^{2\gamma+2}$.

\section{Computation of $F_2$ and $H_2$}\label{F2H2computation}

\setcounter{equation}{0}
\setcounter{thm}{0}
\setcounter{prop}{0}
\setcounter{lemma}{0}
\setcounter{cor}{0}
\setcounter{remark}{0}

In this section we shall derive formulas for 
the Poisson kernels $F_2$ and $H_2$. 
The construction proceeds from basic principles.
Recall the notation
$$
\tilde w_2(x)=(1-x)^2,\quad x\in[0,1),
$$
introduced in Section~\ref{decouplingLaplacians}.

We first construct $H_2$ up to a constant multiple.

\begin{prop}\label{H2typeconstruct}
The function 
$$
u(z)=3\frac{(1-\vert z\vert^2)^4}{\vert1-z\vert^2}
+3\frac{(1-\vert z\vert^2)^5}{\vert1-z\vert^4}
-\frac{3}{2}\frac{(1-\vert z\vert^2)^4}{\vert1-z\vert^4}
+\frac{(1-\vert z\vert^2)^6}{\vert1-z\vert^6},
\quad z\in\D,
$$
is $w_2$-biharmonic in $\D$.
\end{prop}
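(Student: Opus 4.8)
The plan is to recognize $u$ as an instance of the function \eqref{ufcn} in Proposition~\ref{constructionbiharmonicfcn} and then verify the associated system of ordinary differential equations. Explicitly, write
$$
u(z)=\sum_{\beta=1}^{3}\frac{f_\beta(\vert z\vert^2)}{\vert1-z\vert^{2\beta}},\quad z\in\D,
$$
with $f_\beta\equiv0$ for $\beta\geq4$ and
$$
f_1(x)=3(1-x)^4,\qquad f_2(x)=3(1-x)^5-\frac{3}{2}(1-x)^4,\qquad f_3(x)=(1-x)^6.
$$
Since this sum is finite, the convergence hypothesis of Proposition~\ref{constructionbiharmonicfcn} holds trivially, so it suffices to check that the sequence $\{f_\beta\}_{\beta\geq1}$ satisfies \eqref{P1P1eq}, \eqref{f1f2eq} and \eqref{fbreceq} with $\tilde w=\tilde w_2$.

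Most of these equations are settled at once. Equation \eqref{P1P1eq} holds because $f_1$ is a constant multiple of $(1-x)^{\gamma+2}$ with $\gamma=2$, as observed in Section~\ref{decouplingLaplacians}. For $\beta\geq4$ equation \eqref{fbreceq} is vacuous, every function occurring in it being zero. For $\beta=3$, since $f_4=f_5=0$, equation \eqref{fbreceq} reduces to $Q_4\tilde w_2^{-1}Q_3f_3=0$, which holds because $f_3(x)=(1-x)^{\beta+\gamma+1}$ for $\beta=3$, $\gamma=2$ — the case recorded immediately after Lemma~\ref{diffopmonomial}. Finally, the choice of $f_1$ gives $Q_2\tilde w_2^{-1}Q_1f_1=0$ (apply Lemma~\ref{diffopmonomial} with $\beta=1$, $\gamma=2$, $k=4$: the factor $\beta+\gamma+1-k$ vanishes), so that \eqref{fbreceq} for $\beta=1$ collapses to the simplified equation \eqref{fbreceqsimpl}.

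It remains to verify three identities: equation \eqref{f1f2eq}, the simplified equation \eqref{fbreceqsimpl}, and equation \eqref{fbreceq} for $\beta=2$, which with $f_4=0$ reads $Q_3\tilde w_2^{-1}Q_2f_2+(P_4\tilde w_2^{-1}Q_3+Q_3\tilde w_2^{-1}P_3)f_3=0$. Each of these is a linear combination of monomial evaluations supplied by Lemma~\ref{diffopmonomial}: one expands $f_1$, $f_2$, $f_3$ in powers $(1-x)^k$, applies the appropriate formula of Lemma~\ref{diffopmonomial} to each power with $\gamma=2$ and the relevant value of $\beta$, and collects terms. In each case the outcome is a polynomial in $1-x$, all of whose coefficients are then checked to vanish.

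There is no conceptual obstacle here beyond Proposition~\ref{constructionbiharmonicfcn} and Lemma~\ref{diffopmonomial}; the only real work lies in the bookkeeping of the three monomial expansions, the main point to watch being the shifts in the exponents of $1-x$ produced by successive applications of the operators $P_\beta$ and $Q_\beta$ — which is exactly what Lemma~\ref{diffopmonomial} is tailored to track. I note that the coefficients $3$, $3$, $-\frac{3}{2}$, $1$ are not chosen arbitrarily: starting from the leading term $f_3(x)=(1-x)^6$ one solves \eqref{fbreceq} for $\beta=2$ for $f_2$, and then \eqref{fbreceqsimpl} together with \eqref{f1f2eq} determines $f_2$ completely and fixes the normalization of $f_1$; verifying the displayed $u$ then amounts to the routine check just described.
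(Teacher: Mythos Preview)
Your proposal is correct and follows essentially the same approach as the paper: both identify $u$ with a finite sum of the form \eqref{ufcn}, invoke Proposition~\ref{constructionbiharmonicfcn}, and reduce to checking \eqref{P1P1eq}--\eqref{fbreceq} via Lemma~\ref{diffopmonomial}. The only difference is presentational: the paper carries out the computations explicitly and constructively (treating the coefficients of $f_1,f_2$ as unknowns $c_1,c_2$ and solving \eqref{f1f2eq} and \eqref{fbreceqsimpl} to obtain $c_1=c_2=3$), whereas you take the coefficients as given and describe the verification as routine bookkeeping---which it is, though in a final write-up you would want to display at least the key intermediate values (e.g.\ $(P_4\tilde w_2^{-1}Q_3+Q_3\tilde w_2^{-1}P_3)f_3=-18(1-x)^2$) rather than leave all three checks to the reader.
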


\begin{proof}
We construct a solution $\{f_\beta\}_{\beta\geq1}$ of 
\eqref{P1P1eq}-\eqref{fbreceq} for $w=w_2$. 
Set $f_3(x)=(1-x)^{6}$ and 
$f_\beta(x)=0$ for $\beta\geq4$. 
Notice that $Q_4\tilde w_2^{-1}Q_3f_3=0$ (see Lemma~\ref{diffopmonomial}) 
so that \eqref{fbreceq} holds for $\beta\geq3$.

We next search for a function $f_2$ satisfying \eqref{fbreceq} for $\beta=2$. 
By Lemma~\ref{diffopmonomial} we have that 
$$
(P_4\tilde w_2^{-1}Q_3+Q_3\tilde w_2^{-1}P_3)f_3=-18(1-x)^2.
$$
We now set $f_2(x)=-\frac{3}{2}(1-x)^4+c_2(1-x)^5$, 
where $c_2$ is a constant to be determined. 
By Lemma~\ref{diffopmonomial} we have 
$$
Q_3\tilde w_2^{-1}Q_2f_2=18(1-x)^2,
$$
showing that \eqref{fbreceq} holds for $\beta=2$.

We set $f_1(x)=c_1(1-x)^{4}$, where $c_1$ is a constant to be determined. 
A computation using Lemma~\ref{diffopmonomial} 
gives $P_1\tilde w_2^{-1}P_1f_1=0$, 
showing that \eqref{P1P1eq} is satisfied.
Notice also that $Q_2\tilde w_2^{-1}Q_1f_1=0$.

We proceed to determine the constants $c_1$ and $c_2$ in such a way that 
\eqref{f1f2eq} and \eqref{fbreceq} for $\beta=1$ are satisfied.
We consider first \eqref{f1f2eq}. 
A computation using  Lemma~\ref{diffopmonomial} gives 
$$
(P_2\tilde w_2^{-1}Q_1+Q_1\tilde w_2^{-1}P_1)f_1=6c_1,
$$
and similarly that
$$
P_2\tilde w_2^{-1}P_2f_2=12-10c_2.
$$
Now
$$
(P_2\tilde w_2^{-1}Q_1+Q_1\tilde w_2^{-1}P_1)f_1
+P_2\tilde w_2^{-1}P_2f_2=12-10c_2+6c_1,
$$  
showing that \eqref{f1f2eq} is satisfied if and only if $12-10c_2+6c_1=0$.

Let us now consider \eqref{fbreceq} for $\beta=1$ 
which simplifies to \eqref{fbreceqsimpl}. 
A computation using Lemma~\ref{diffopmonomial} 
gives 
$$
(P_3\tilde w_2^{-1}Q_2+Q_2\tilde w_2^{-1}P_2)f_2=-60+(36+4c_2)(1-x),
$$
and similarly that 
$$
P_3\tilde w_2^{-1}P_3f_3=60-48(1-x).
$$
Now 
$$
(P_3\tilde w_2^{-1}Q_2+Q_2\tilde w_2^{-1}P_2)f_2
+P_3\tilde w_2^{-1}P_3f_3
=(-12+4c_2)(1-x),
$$
showing that \eqref{fbreceqsimpl} is fulfilled if and only if $c_2=3$.
Setting $c_1=c_2=3$ we obtain a solution $\{f_\beta\}_{\beta=1}^3$ of 
equations \eqref{P1P1eq}-\eqref{fbreceq} for $w=w_2$. 
The conclusion of the proposition 
now follows by Proposition~\ref{constructionbiharmonicfcn}. 
\end{proof}

We now compute $H_2$.

\begin{thm}\label{H2formula}
The function $H_2$ is given by the formula
$$
H_2(z)=\frac{1}{2}\frac{(1-\vert z\vert^2)^4}{\vert1-z\vert^2}
+\frac{1}{2}\frac{(1-\vert z\vert^2)^5}{\vert1-z\vert^4}
-\frac{1}{4}\frac{(1-\vert z\vert^2)^4}{\vert1-z\vert^4}
+\frac{1}{6}\frac{(1-\vert z\vert^2)^6}{\vert1-z\vert^6},
\quad z\in\D.
$$
\end{thm}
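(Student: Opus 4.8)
The plan is to recognize $H_2$ as a scalar multiple of the $w_2$-biharmonic function $u$ manufactured in Proposition~\ref{H2typeconstruct}: one checks that a suitable normalization of $u$ solves the distributional Dirichlet problem characterizing $H_2$, and then one invokes the uniqueness statement for the problem~\eqref{Dirichletproblem} recalled in the Introduction.

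First I would record that $u$ is $w_2$-biharmonic in $\D$ by Proposition~\ref{H2typeconstruct} (which itself rests on Proposition~\ref{constructionbiharmonicfcn}), and that, being a finite linear combination of functions of the form $(1-\lvert z\rvert^2)^\alpha/\lvert1-z\rvert^{2\beta}$, it is smooth in $\D$. The only remaining task is then to compute $\lim_{r\to1}u_r$ and $\partial_nu$ in $\De'(\T)$ and to check that they equal $0$ and $6\delta_1$, so that $\tfrac16u$ solves the problem characterizing $H_2$.

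Next I would treat the four terms of $u$ separately. A term $(1-\lvert z\rvert^2)^\alpha/\lvert1-z\rvert^{2\beta}$ is written as $(1-\lvert z\rvert^2)^m\,v_\beta(z)$ with $m=\alpha-(2\beta-1)\geq0$, where $v_\beta(z)=(1-\lvert z\rvert^2)^{2\beta-1}/\lvert1-z\rvert^{2\beta}$ is the building block of Section~\ref{boundarybehaviorbbfcns}. For $\beta\geq2$, Theorem~\ref{asymptoticbdrybehavior} gives $\langle(v_\beta)_r,\varphi\rangle=a_\beta\varphi(1)+O(1-r)$ for $\varphi\in C^2(\T)$, with $a_2=2$ and $a_3=6$ by Remark~\ref{asymptoticssums}; for $\beta=1$ one uses only that $v_1=P$ satisfies $\lvert\langle P_r,\varphi\rangle\rvert\leq\|\varphi\|_\infty$. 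Multiplying by $(1-r^2)^m=(1-r)^m\bigl(2-(1-r)\bigr)^m$ and collecting powers of $1-r$ shows that a term with $m\geq2$ contributes $0$ to both the boundary value and the normal derivative, whereas a term $c\,(1-\lvert z\rvert^2)^{2\beta}/\lvert1-z\rvert^{2\beta}$ with $m=1$ has boundary value $0$ and normal derivative $2ca_\beta\delta_1$. Since the four terms of $u$ have $(\alpha,\beta,m)=(4,1,3)$, $(5,2,2)$, $(4,2,1)$, $(6,3,1)$, this yields $\lim_{r\to1}u_r=0$ and, from the last two terms (coefficients $-\tfrac32$ and $1$), $\partial_nu=\bigl(-\tfrac32\cdot2a_2+2a_3\bigr)\delta_1=(-6+12)\delta_1=6\delta_1$ in $\De'(\T)$.

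Finally, $v=\tfrac16u$ is $w_2$-biharmonic, vanishes on $\T$ and satisfies $\partial_nv=\delta_1$ on $\T$ in the distributional sense, i.e.\ $v$ solves the problem characterizing $H_2$; by the uniqueness statement of~\cite{Olofsson05} recalled in the Introduction, $H_2=\tfrac16u$, which is precisely the asserted formula. The main obstacle is the bookkeeping in the middle step: one must correctly identify which terms survive the limits, observe that only the $m=1$ terms (the third and fourth) contribute to $\partial_nu$ and only through the constants $a_\beta$ rather than the first-order constants $b_\beta$ of Theorem~\ref{asymptoticbdrybehavior}, and handle the $\beta=1$ term by an elementary order-zero estimate, since Theorem~\ref{asymptoticbdrybehavior} is stated only for $\beta\geq2$.
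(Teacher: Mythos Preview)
Your proposal is correct and follows essentially the same route as the paper: identify $u$ from Proposition~\ref{H2typeconstruct} as $w_2$-biharmonic, compute its distributional boundary value and normal derivative term by term via the building block asymptotics of Section~\ref{boundarybehaviorbbfcns} (finding $0$ and $6\delta_1$), and conclude $H_2=\tfrac16u$ by uniqueness. The only cosmetic difference is that the paper packages the boundary computation through the integral means (Proposition~\ref{integralmeanexpansion}) rather than pairing with a general $\varphi$, and you are if anything more careful in flagging the separate treatment of the $\beta=1$ term.
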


\begin{proof}
Denote by $u$ the function in Proposition~\ref{H2typeconstruct} 
which we know is $w_2$-biharmonic. 
We compute the asymptotics of the integral means of $u$. 
By Proposition~\ref{integralmeanexpansion} and 
Remark~\ref{asymptoticssums} we have that
$$
\frac{1}{2\pi}\int_\T u(re^{i\theta})d\theta
=-\frac{3}{2}(1-r^2)2+(1-r^2)6+O((1-r)^2)=6(1-r)+O((1-r)^2)
$$
as $r\to1$. By Theorem~\ref{asymptoticbdrybehavior} 
we have that $u$ solves the Dirichlet problem (\ref{Dirichletproblem}) 
for $w=w_2$ in the distributional sense with $f_0=0$ and $f_1=6\delta_1$.
By uniqueness of solutions of (\ref{Dirichletproblem}) 
we conclude that $u=6H_2$ in $\D$ 
(see~\cite[Theorem~2.1]{Olofsson05}). 
Solving for $H_2$ gives the conclusion of the theorem.
\end{proof}

We next construct a nontrivial $w_2$-biharmonic function with 
boundary value equal to a constant multiple of $\delta_1$.

\begin{prop}\label{F2typeconstruct}
The function 
\begin{align*}
u(z)=&-8\frac{(1-\vert z\vert^2)^4}{\vert1-z\vert^2}
+\frac{3}{2}\frac{(1-\vert z\vert^2)^4}{\vert1-z\vert^4}
-6\frac{(1-\vert z\vert^2)^5}{\vert1-z\vert^4}\\
&-3\frac{(1-\vert z\vert^2)^5}{\vert1-z\vert^6}
+\frac{(1-\vert z\vert^2)^7}{\vert1-z\vert^8},
\quad z\in\D,
\end{align*}
is $w_2$-biharmonic in $\D$.
\end{prop}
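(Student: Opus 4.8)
The plan is to argue exactly as in the proof of Proposition~\ref{H2typeconstruct}: I would exhibit a system $\{f_\beta\}_{\beta\geq1}$ of smooth functions on $[0,1)$ solving \eqref{P1P1eq}, \eqref{f1f2eq} and \eqref{fbreceq} with $w=w_2$ whose associated series \eqref{ufcn} reproduces the displayed function $u$, and then invoke Proposition~\ref{constructionbiharmonicfcn}. Reading $u$ off term by term, the candidate is $f_1(x)=-8(1-x)^4$, $f_2(x)=\frac{3}{2}(1-x)^4-6(1-x)^5$, $f_3(x)=-3(1-x)^5$, $f_4(x)=(1-x)^7$, together with $f_\beta=0$ for $\beta\geq5$. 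These are monomials $(1-x)^k$ with $k$ in the ranges expected for $\gamma=2$, the leading index being $\beta_0=\gamma+2=4$ with $f_{\gamma+2}(x)=(1-x)^{2\gamma+3}=(1-x)^7$, exactly as prescribed by the discussion at the end of Section~\ref{decouplingLaplacians}.

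The verification then runs from the leading term downward. Since $(1-x)^7=(1-x)^{\beta+\gamma+1}$ for $\beta=4$, $\gamma=2$, the remark following Lemma~\ref{diffopmonomial} gives $Q_5\tilde w_2^{-1}Q_4f_4=0$, so \eqref{fbreceq} holds for every $\beta\geq4$. For \eqref{fbreceq} with $\beta=3$ one evaluates $(P_5\tilde w_2^{-1}Q_4+Q_4\tilde w_2^{-1}P_4)f_4$ and $Q_4\tilde w_2^{-1}Q_3f_3$ by Lemma~\ref{diffopmonomial}; both turn out to be multiples of $(1-x)^3$, and the coefficients cancel precisely with $f_3=-3(1-x)^5$. (Here $(1-x)^6$ lies in the kernel of $Q_4\tilde w_2^{-1}Q_3$, so it is a permissible but optional homogeneous term in $f_3$; matching $u$ forces its coefficient to vanish.) Equation \eqref{fbreceq} with $\beta=2$ is handled the same way, using the already-fixed $f_3$ and $f_4$: the sum $(P_4\tilde w_2^{-1}Q_3+Q_3\tilde w_2^{-1}P_3)f_3+P_4\tilde w_2^{-1}P_4f_4$ reduces to a multiple of $(1-x)^2$, which is matched by $Q_3\tilde w_2^{-1}Q_2f_2$ exactly when the $(1-x)^4$-coefficient of $f_2$ equals $\frac{3}{2}$.

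Finally, with $f_1(x)=-8(1-x)^4=-8(1-x)^{\gamma+2}$, Lemma~\ref{diffopmonomial} gives $P_1\tilde w_2^{-1}P_1f_1=0$, so \eqref{P1P1eq} holds, and also $Q_2\tilde w_2^{-1}Q_1f_1=0$, so \eqref{fbreceq} for $\beta=1$ reduces to \eqref{fbreceqsimpl}. It then remains to check \eqref{f1f2eq} and \eqref{fbreceqsimpl} by direct computation with Lemma~\ref{diffopmonomial}: \eqref{fbreceqsimpl} pins down the $(1-x)^5$-coefficient of $f_2$ to be $-6$, and \eqref{f1f2eq} then pins down the coefficient $-8$ of $f_1$. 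Once \eqref{P1P1eq}, \eqref{f1f2eq} and \eqref{fbreceq} have all been verified, Proposition~\ref{constructionbiharmonicfcn} gives $\Delta w_2^{-1}\Delta u=0$ in $\D$. The only real difficulty is organizational rather than conceptual: one must carry the correct ansatz for each $f_\beta$ (enough monomials to solve the recursion, but no spurious terms that would spoil smoothness at the origin or the $\delta_1$-type boundary behaviour) and keep the numerous applications of Lemma~\ref{diffopmonomial} straight so that the resulting linear constraints on the coefficients come out consistent with the stated values — precisely the sort of bookkeeping handled by computer algebra and carried out explicitly elsewhere in the paper.
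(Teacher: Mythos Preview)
Your proposal is correct and follows essentially the same approach as the paper: read off the candidate system $\{f_\beta\}_{\beta=1}^4$ from the displayed $u$, verify equations \eqref{P1P1eq}, \eqref{f1f2eq} and \eqref{fbreceq} top-down via Lemma~\ref{diffopmonomial}, and then invoke Proposition~\ref{constructionbiharmonicfcn}. The paper's proof differs only cosmetically in that it leaves $c_1,c_2$ as unknowns and solves for them from \eqref{f1f2eq} and \eqref{fbreceqsimpl}, whereas you fix them from the start and verify; the arithmetic (e.g.\ $(P_5\tilde w_2^{-1}Q_4+Q_4\tilde w_2^{-1}P_4)f_4=-72(1-x)^3$, etc.) is carried out explicitly in the paper but only described in your sketch.
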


\begin{proof}
We construct a solution $\{f_\beta\}_{\beta\geq1}$ of 
\eqref{P1P1eq}-\eqref{fbreceq} for $w=w_2$.
Put $f_4(x)=(1-x)^{7}$ and $f_\beta=0$ for $\beta>4$. 
By  Lemma~\ref{diffopmonomial}  
we have $Q_5\tilde w_2^{-1}Q_4f_4=0$, 
so that \eqref{fbreceq} holds for $\beta\geq4$.

We proceed to choose $f_3$ such that \eqref{fbreceq} holds for $\beta=3$. 
By Lemma~\ref{diffopmonomial} we have that
$$
(P_5\tilde w_2^{-1}Q_4+Q_4\tilde w_2^{-1}P_4)f_4=-72(1-x)^3.
$$
We set $f_3(x)=-3(1-x)^5$. Then $Q_4\tilde w_2^{-1}Q_3f_3=72(1-x)^3$, 
showing that \eqref{fbreceq} holds for $\beta=3$.

We proceed to choose $f_2$ such that \eqref{fbreceq} holds for $\beta=2$. 
A computation using Lemma~\ref{diffopmonomial} 
gives 
$$
(P_4\tilde w_2^{-1}Q_3+Q_3\tilde w_2^{-1}P_3)f_3=144(1-x)^2-252(1-x),
$$ 
and similarly that  
$$
P_4\tilde w_2^{-1}P_4f_4=-126(1-x)^2+252(1-x).
$$
Now 
$$
(P_4\tilde w_2^{-1}Q_3+Q_3\tilde w_2^{-1}P_3)f_3+P_4\tilde w_2^{-1}P_4f_4=18(1-x)^2.
$$
We now set $f_2(x)=\frac{3}{2}(1-x)^4+c_2(1-x)^5$, 
where $c_2$ is a constant to be determined. 
By Lemma~\ref{diffopmonomial} 
we have 
$Q_3\tilde w_2^{-1}Q_2f_2=-18(1-x)^2$, 
showing that \eqref{fbreceq} holds for $\beta=2$.

We set $f_1(x)=c_1(1-x)^4$, 
where $c_1$ is a constant to be determined. 
Then $P_1\tilde w_2^{-1}P_1f_1=0$ by Lemma~\ref{diffopmonomial}, 
showing that \eqref{P1P1eq} is satisfied. 
Notice also that $Q_2\tilde w_2^{-1}Q_1f_1=0$.

It remains to choose the constants $c_1$ and $c_2$ such that 
\eqref{f1f2eq} and \eqref{fbreceq} for $\beta=1$ are fulfilled.
We consider first \eqref{f1f2eq}.
By Lemma~\ref{diffopmonomial} 
we have
$$
(P_2\tilde w_2^{-1}Q_1+Q_1\tilde w_2^{-1}P_1)f_1=6c_1,
$$
and also that
$$
P_2\tilde w_2^{-1}P_2f_2=-12-10c_2.
$$
Now 
$$
(P_2\tilde w_2^{-1}Q_1+Q_1\tilde w_2^{-1}P_1)f_1+P_2\tilde w_2^{-1}P_2f_2=-12+6c_1-10c_2,
$$
showing that \eqref{f1f2eq} is satisfied if and only if $-12+6c_1-10c_2=0$.

We next consider \eqref{fbreceq} for $\beta=1$ 
which simplifies to \eqref{fbreceqsimpl}. 
A computation using Lemma~\ref{diffopmonomial} 
shows 
$$
(P_3\tilde w_2^{-1}Q_2+Q_2\tilde w_2^{-1}P_2)f_2=60+(4c_2-36)(1-x),
$$
and similarly that 
$$
P_3\tilde w_2^{-1}P_3f_3=-60+60(1-x).
$$
Now 
$$
(P_3\tilde w_2^{-1}Q_2+Q_2\tilde w_2^{-1}P_2)f_2+P_3\tilde w_2^{-1}P_3f_3=(24+4c_2)(1-x),
$$
showing that \eqref{fbreceqsimpl} is fulfilled if and only if
$c_2=-6$. We conclude that both \eqref{f1f2eq} and 
\eqref{fbreceqsimpl} are satisfied if and only if $c_1=-8$ and $c_2=-6$.
Setting $c_1=-8$ and $c_2=-6$ we obtain a solution 
$\{f_\beta\}_{\beta=1}^4$ of \eqref{P1P1eq}-\eqref{fbreceq} for $w=w_2$. 
The conclusion of the proposition now follows by 
Proposition~\ref{constructionbiharmonicfcn}.
\end{proof}

We now compute $F_2$.

\begin{thm}\label{F2formula}
The function $F_2$ is given by the formula
\begin{align*}
F_2(z)=&\frac{1}{2}\frac{(1-\vert z\vert^2)^4}{\vert1-z\vert^2}
+\frac{3}{2}\frac{(1-\vert z\vert^2)^5}{\vert1-z\vert^4}
-\frac{3}{2}\frac{(1-\vert z\vert^2)^4}{\vert1-z\vert^4}\\
&+\frac{3}{2}\frac{(1-\vert z\vert^2)^6}{\vert1-z\vert^6}   
-\frac{3}{2}\frac{(1-\vert z\vert^2)^5}{\vert1-z\vert^6}
+\frac{1}{2}\frac{(1-\vert z\vert^2)^7}{\vert1-z\vert^8},
\quad z\in\D.
\end{align*}
\end{thm}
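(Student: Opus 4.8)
The plan is to realize $F_2$ as an explicit linear combination of two $w_2$-biharmonic functions whose distributional boundary data we can compute: the function $u$ constructed in Proposition~\ref{F2typeconstruct} and the Poisson kernel $H_2$ of Theorem~\ref{H2formula}. The two coefficients are determined by the requirement that the combination have boundary value $\delta_1$ and vanishing inward normal derivative, after which the uniqueness theorem for the weighted biharmonic Dirichlet problem~\cite[Theorem~2.1]{Olofsson05} identifies the combination with $F_2$.

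The first task is to determine the boundary value $f_0$ and normal derivative $f_1$ of the function $u$ of Proposition~\ref{F2typeconstruct}, by decomposing $u$ into building blocks $(1-\lvert z\rvert^2)^\alpha/\lvert1-z\rvert^{2\beta}$ and adding up the contributions. The two terms with $\alpha=2\beta-1$, namely $-3(1-\lvert z\rvert^2)^5/\lvert1-z\rvert^6$ (here $\beta=3$) and $(1-\lvert z\rvert^2)^7/\lvert1-z\rvert^8$ (here $\beta=4$), are covered directly by Theorem~\ref{asymptoticbdrybehavior} together with Proposition~\ref{integralmeanexpansion} and Remark~\ref{asymptoticssums}; they contribute $-3a_3\delta_1+a_4\delta_1=2\delta_1$ to $f_0$ and $-3b_3\delta_1+b_4\delta_1=-24\delta_1$ to $f_1$. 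For each of the remaining terms, which has $\alpha\geq2\beta$, I would use the factorization
$$
\frac{(1-\lvert z\rvert^2)^\alpha}{\lvert1-z\rvert^{2\beta}}
=(1-\lvert z\rvert^2)^{\alpha-2\beta+1}\,
\frac{(1-\lvert z\rvert^2)^{2\beta-1}}{\lvert1-z\rvert^{2\beta}}
$$
together with the fact that the dilations of the second factor paired against a test function are bounded in $r$ (by Theorem~\ref{asymptoticbdrybehavior}, and, when $\beta=1$, because the second factor is then the ordinary Poisson kernel). The terms with $\alpha\geq2\beta+1$, here $-8(1-\lvert z\rvert^2)^4/\lvert1-z\rvert^2$ and $-6(1-\lvert z\rvert^2)^5/\lvert1-z\rvert^4$, then contribute $O((1-r)^2)$ to $\langle u_r,\varphi\rangle$ and hence nothing to $f_0$ or $f_1$, while the unique term with $\alpha=2\beta$, here $\tfrac32(1-\lvert z\rvert^2)^4/\lvert1-z\rvert^4$ (with $\beta=2$), contributes, upon writing $1-r^2=2(1-r)-(1-r)^2$ and using $a_2=2$, nothing to $f_0$ and $6\delta_1$ to $f_1$. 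In total $f_0=2\delta_1$ and $f_1=\partial_n u=-24\delta_1+6\delta_1=-18\delta_1$.

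With this information I would put $F=\tfrac12 u+9H_2$; a linear combination of $w_2$-biharmonic functions is again $w_2$-biharmonic, and by Theorem~\ref{H2formula} the kernel $H_2$ has boundary value $0$ and normal derivative $\delta_1$, so $F$ is $w_2$-biharmonic with boundary value $\tfrac12\cdot2\delta_1=\delta_1$ and normal derivative $\tfrac12(-18\delta_1)+9\delta_1=0$ (the coefficients $\tfrac12$ and $9$ being forced by precisely these two requirements). By the uniqueness theorem~\cite[Theorem~2.1]{Olofsson05} we conclude $F=F_2$, and it remains only to substitute the formulas for $u$ from Proposition~\ref{F2typeconstruct} and for $H_2$ from Theorem~\ref{H2formula} and to collect the six resulting terms, which yields the asserted expression for $F_2$.

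The step I expect to be the main obstacle is the boundary analysis of $u$. Theorem~\ref{asymptoticbdrybehavior} is stated only for the critical building blocks with $\alpha=2\beta-1$, so the single term with $\alpha=2\beta$ — which is the only non-critical term that influences the normal derivative — has to be dealt with separately by the short factorization argument above, and one must keep careful track of which terms contribute to $f_0$ as opposed to $f_1$ and of the precise numerical constants. Once $(f_0,f_1)$ for the function of Proposition~\ref{F2typeconstruct} has been pinned down, choosing the coefficients $\tfrac12$ and $9$ and carrying out the final algebraic simplification is routine.
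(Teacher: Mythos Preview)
Your proposal is correct and follows essentially the same route as the paper: compute the distributional boundary data of the function $u$ from Proposition~\ref{F2typeconstruct}, obtain $(f_0,f_1)=(2\delta_1,-18\delta_1)$, invoke uniqueness to get $u=2F_2-18H_2$, and solve for $F_2$ using Theorem~\ref{H2formula}. The only difference is cosmetic: the paper computes the integral-means expansion of $u$ in one line (absorbing the $\alpha\geq 2\beta+1$ terms into the $O((1-r)^2)$ remainder and handling the $\alpha=2\beta$ term via the factor $\tfrac32(1-r^2)\cdot a_2$), whereas you spell out the factorization argument for each non-critical building block explicitly; the underlying analysis is identical.
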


\begin{proof}
Denote by $u$ be the function in Proposition~\ref{F2typeconstruct} 
which we know is $w_2$-biharmonic. 
We compute the asymptotics of the integral means of $u$. 
By Proposition~\ref{integralmeanexpansion} and 
Remark~\ref{asymptoticssums} we have that
\begin{align*}
\frac{1}{2\pi}\int_\T u(re^{i\theta})d\theta
&=\frac{3}{2}(1-r^2)2-3(6-12(1-r))+20-60(1-r)+O((1-r)^2)\\
&=2-18(1-r)+O((1-r)^2)
\end{align*}
as $r\to1$. 
By Theorem~\ref{asymptoticbdrybehavior} 
we have that $u$ solves the Dirichlet problem (\ref{Dirichletproblem}) 
for $w=w_2$ in the distributional sense with $f_0=2\delta_1$ and $f_1=-18\delta_1$.
By uniqueness of solutions of (\ref{Dirichletproblem}) 
we conclude that $u=2F_2-18H_2$ in $\D$ 
(see~\cite[Theorem~2.1]{Olofsson05}). 
Solving for $F_2$ using the formula for $H_2$ in Theorem~\ref{H2formula} 
gives the conclusion of the theorem.
\end{proof}

\section{Poisson kernels for integer parameter weights}\label{expectedFHformulas}

\setcounter{equation}{0}
\setcounter{thm}{0}
\setcounter{prop}{0}
\setcounter{lemma}{0}
\setcounter{cor}{0}
\setcounter{remark}{0}

Computations of Poisson kernels $F_\gamma$ and $H_\gamma$ for integer parameter 
standard weights along the lines of what we did 
in Section~\ref{F2H2computation} for $\gamma=2$
suggest that these kernels have a certain explicit form. 
We state this in the form of two conjectures below. 
We use the symbol $\lfloor x\rfloor$ to denote the floor of a real number $x$, that is, 
the number $\lfloor x\rfloor$ is the largest integer less than or equal to $x$.

\begin{conj}\label{Fform}
For $\gamma$ a non-negative integer, the function $F_\gamma$ has the form 
$$
F_\gamma(z)=\sum_{\beta=1}^{\gamma+2}
\frac{f_\beta(\vert z\vert^2)}{\vert1-z\vert^{2\beta}},
\quad z\in\D,
$$ 
where the functions $\{f_\beta\}_{\beta=1}^{\gamma+2}$ are polynomials given by
$$
2f_\beta(x)=
\sum_{k=0}^{\beta+\gamma+1-\max(2\beta-1,\gamma+2)}
c_k\binom{\gamma+1-2k}{\beta-1-k}(1-x)^{\beta+\gamma+1-k}
$$
for $1\leq\beta\leq\gamma+2$,
where the numbers $\{c_k\}_{k=0}^{\lfloor(\gamma+1)/2\rfloor}$ 
are given by $c_0=1$ and 
$$
\sum_{k=0}^jc_k\binom{\gamma+1-2k}{j-k}=0
$$
for $1\leq j\leq\lfloor(\gamma+1)/2\rfloor$.
\end{conj}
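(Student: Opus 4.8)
The plan is to carry out, for an arbitrary non-negative integer $\gamma$, the same three-step scheme that yielded $F_2$ in Section~\ref{F2H2computation}. Set
\[
u_\gamma(z)=\sum_{\beta=1}^{\gamma+2}\frac{(2f_\beta)(\vert z\vert^2)}{\vert1-z\vert^{2\beta}},\quad z\in\D,
\]
with $2f_\beta$ the polynomials of the statement (and $f_\beta=0$ for $\beta>\gamma+2$); the sum being finite, the convergence hypothesis of Proposition~\ref{constructionbiharmonicfcn} is automatically satisfied. \emph{Step~1:} show that the sequence $\{f_\beta\}_{\beta\ge1}$ solves the decoupled system \eqref{P1P1eq}--\eqref{fbreceq} with $\tilde w=\tilde w_\gamma$, so that, by Proposition~\ref{constructionbiharmonicfcn}, the function $u_\gamma$ is $w_\gamma$-biharmonic. \emph{Step~2:} compute the distributional boundary value $f_0$ and inward normal derivative $f_1$ of $u_\gamma$ and check that $f_0=2\delta_1$ and $f_1=0$. \emph{Step~3:} by uniqueness of solutions of \eqref{Dirichletproblem} (see \cite[Theorem~2.1]{Olofsson05}) conclude that $u_\gamma=2F_\gamma$, that is, $F_\gamma=\tfrac12u_\gamma$, which is the assertion.

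For Step~1 the two ``boundary'' equations are built in: since $2f_1(x)=(1-x)^{\gamma+2}$ one has $P_1\tilde w_\gamma^{-1}P_1f_1=0$ and $Q_2\tilde w_\gamma^{-1}Q_1f_1=0$, so \eqref{P1P1eq} holds and \eqref{fbreceq} for $\beta=1$ collapses to \eqref{fbreceqsimpl}; and since $2f_{\gamma+2}(x)=(1-x)^{2\gamma+3}=(1-x)^{(\gamma+2)+\gamma+1}$ one has $Q_{\gamma+3}\tilde w_\gamma^{-1}Q_{\gamma+2}f_{\gamma+2}=0$, so \eqref{fbreceq} holds for $\beta\ge\gamma+2$. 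For the remaining equations --- \eqref{f1f2eq}, \eqref{fbreceqsimpl}, and \eqref{fbreceq} for $2\le\beta\le\gamma+1$ --- I would insert the monomial expansion of each $f_\beta$ and apply Lemma~\ref{diffopmonomial} to each monomial $(1-x)^{\beta+\gamma+1-k}$ separately. Collecting the coefficient of a given power $(1-x)^m$ should then exhibit it, after simplification, as a rational factor depending on $\beta,\gamma,m$ times a convolution
\[
\sum_{k}c_k\binom{\gamma+1-2k}{j-k},\qquad 1\le j\le\lfloor(\gamma+1)/2\rfloor ,
\]
which vanishes by the defining relations for the $c_k$; here the binomial weight $\binom{\gamma+1-2k}{\beta-1-k}$ inside $f_\beta$ is precisely what makes the three families of terms produced by Lemma~\ref{diffopmonomial} --- the $P\tilde w^{-1}P$ part, the $P\tilde w^{-1}Q+Q\tilde w^{-1}P$ part and the $Q\tilde w^{-1}Q$ part --- recombine into this convolution. \textbf{Proving this combinatorial identity uniformly in $\gamma$ is the crux}, and the reason the statement is still only a conjecture. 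A useful aid should be a closed form for the normalizing constants: writing $C(y)=\sum_kc_ky^k$, the substitution $y=x/(1+x)^2$ gives
\[
\sum_j\Bigl(\sum_kc_k\binom{\gamma+1-2k}{j-k}\Bigr)x^j=(1+x)^{\gamma+1}C\bigl(x/(1+x)^2\bigr),
\]
and one verifies that $C$ is the Taylor polynomial of $\bigl((1+\sqrt{1-4y})/2\bigr)^{\gamma+1}$ truncated at order $\lfloor(\gamma+1)/2\rfloor$, so that $(1+x)^{\gamma+1}C(x/(1+x)^2)=1+O(x^{\lfloor(\gamma+1)/2\rfloor+1})$ automatically. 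With this in hand the cancellations needed in Step~1 ought to reduce to a manageable contiguous (hypergeometric) identity, but pushing this through for all $\gamma$ --- as opposed to the finite verification $0\le\gamma\le80$ already done by computer --- is where a genuinely new argument is needed.

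For Step~2, every summand of $u_\gamma$ is of building-block type $(1-\vert z\vert^2)^k/\vert1-z\vert^{2\beta}$ with $k\ge2\beta-1$; writing $k=2\beta-1+i$ and factoring out the $\theta$-independent scalar $(1-r^2)^i$, Theorem~\ref{asymptoticbdrybehavior} (for $\beta\ge2$) together with the elementary expansion $\langle P_r,\varphi\rangle=\varphi(1)+O(1-r)$ of the ordinary Poisson kernel (for the $\beta=1$ term, whose single exponent is $\gamma+2\ge2$) shows that only the monomials with $i=0$ contribute to $f_0$ and only those with $i\in\{0,1\}$ contribute to $f_1$. Since the coefficient of $(1-x)^{2\beta-1}$ in $2f_\beta$ equals $c_{\gamma+2-\beta}$ and that of $(1-x)^{2\beta}$ equals $(2\beta-\gamma-1)c_{\gamma+1-\beta}$, the claims $f_0=2\delta_1$ and $f_1=0$ reduce --- using the closed forms for $a_\beta,b_\beta$ from Proposition~\ref{integralmeanexpansion} and the convention $a_1=1$ --- to the two summation identities
\[
\sum_{\ell\ge0}c_\ell\,a_{\gamma+2-\ell}=2\qquad\text{and}\qquad\sum_{\ell\ge0}c_\ell\,b_{\gamma+2-\ell}+2\sum_{\ell\ge0}(\gamma+1-2\ell)\,c_\ell\,a_{\gamma+1-\ell}=0 ,
\]
where $c_\ell$ is understood to be $0$ for $\ell$ outside $0\le\ell\le\lfloor(\gamma+1)/2\rfloor$. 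These are of the same nested-binomial type as the identity in Step~1 and would be attacked by the same generating-function bookkeeping.

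Finally, in Step~3, should it be inconvenient to isolate the case $f_1=0$, it is equivalent to keep $f_1=b\delta_1$, deduce $u_\gamma=2F_\gamma+bH_\gamma$, and then substitute the formula for $H_\gamma$; since the latter is itself only conjectural in general (Conjecture~\ref{Hform}), a self-contained proof should really run the constructions for $F_\gamma$ and $H_\gamma$ in parallel --- for example by an induction on $\gamma$ in which the binomial identities underlying both families are established at once --- which is presumably the shape a complete argument would take.
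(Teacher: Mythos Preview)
The statement is labeled a \emph{conjecture} in the paper, and the paper does not prove it; its contribution is a computer verification for each integer $0\le\gamma\le80$ in Maxima, carried out by exactly the three-step scheme you describe: check that the candidate $\{f_\beta\}$ satisfies the decoupled system \eqref{P1P1eq}--\eqref{fbreceq}, compute the boundary data via Theorem~\ref{asymptoticbdrybehavior} and Proposition~\ref{integralmeanexpansion}, and invoke uniqueness. So there is no ``paper's own proof'' to compare against --- your outline \emph{is} the paper's verification strategy, promoted from a finite check to a proposed uniform argument.

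You have correctly located the genuine gap: the combinatorial cancellation in Step~1 (and the companion identities in Step~2) is exactly what the paper could not do symbolically and is why the statement remains conjectural. Your generating-function observation --- that $C(y)$ is the truncation of $\bigl((1+\sqrt{1-4y})/2\bigr)^{\gamma+1}$ and hence $(1+x)^{\gamma+1}C\bigl(x/(1+x)^2\bigr)=1+O\bigl(x^{\lfloor(\gamma+1)/2\rfloor+1}\bigr)$ --- is a piece of structure the paper does not record and is a reasonable starting point for attacking the identity. The suggestion to treat $F_\gamma$ and $H_\gamma$ in parallel is also sensible, since the paper's computation of $F_2$ indeed passes through $H_2$. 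In short: your proposal is honest about being a sketch, matches the paper's method, and adds a potentially useful algebraic handle; but it does not close the gap, and a proof along these lines would be a new result rather than a reconstruction of something already in the paper.
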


We mention that the choice of $c_k$'s in Conjecture~\ref{Fform} ensures that 
$f_1(0)=f_{\gamma+2}(0)=1/2$ and 
$f_\beta(0)=0$ for $2\leq\beta\leq\gamma+1$. 

We conjecture that
the polynomial $f_\beta(x)$ is a certain linear combination of 
monomials $(1-x)^k$ with exponents $k$ in 
the range $\max(2\beta-1,\gamma+2)\leq k\leq\beta+\gamma+1$.
For $\beta=1,\gamma+2$ this range consists of a single exponent $k=\beta+\gamma+1$, 
whereas for intermediate values of $\beta$ the number of exponents grow 
to a maximum of $\lfloor(\gamma+1)/2\rfloor+1$ terms. 

To illustrate the statement of Conjecture~\ref{Fform} we comment on the case $\gamma=5$. 
The function $F_5$ has the form
$$
F_5(z)=\sum_{\beta=1}^7\frac{f_\beta(\vert z\vert^2)}{\vert1-z\vert^{2\beta}},
\quad z\in\D,
$$
where
\begin{align*}
2f_1(x)&=(1-x)^7,\\
2f_2(x)&=6(1-x)^8 -6(1-x)^7,\\
2f_3(x)&=15(1-x)^{9}-24(1-x)^8+9(1-x)^7,\\
2f_4(x)&=20(1-x)^{10}-36(1-x)^9+18(1-x)^8-2(1-x)^7,\\
2f_5(x)&=15(1-x)^{11}-24(1-x)^{10}+9(1-x)^9,\\
2f_6(x)&=6(1-x)^{12}-6(1-x)^{11},\\
2f_7(x)&=(1-x)^{13}.
\end{align*}
The columns in the above table of coefficients 
are up to a multiplicative constant of normalization 
rows in the standard table of binomial coefficients (Pascal's triangle):
the numbers $1$, $6$, $15$, $20$, $15$, $6$, $1$ constitute the sixth row 
$\binom{6}{k}$, $0\leq k\leq 6$, of binomial coefficients, 
the numbers $1$, $4$, $6$, $4$, $1$ are $\binom{4}{k}$, $0\leq k\leq 4$, 
the numbers $1$, $2$, $1$ are  $\binom{2}{k}$, $0\leq k\leq 2$,
and the number $1$ equals $\binom{0}{0}$. The constants of normalization 
$\{c_k\}_{k=0}^3$ are chosen such that 
$2f_1(0)=2f_7(0)=1$ and $f_\beta(0)=0$ for $2\leq\beta\leq6$, that is,
$c_0=1$, $c_1=-6$, $c_2=9$ and $c_3=-2$.

Using the computer algebra package Maxima we have verified 
Conjecture~\ref{Fform} for integer weight parameters $\gamma$ 
in the range $0\leq\gamma\leq 80$. 
This verification has been made in a straightforward manner: 
Fix $\gamma$ and let $F_\gamma$ and 
$\{f_\beta\}_{\beta=1}^{\gamma+2}$ be as in Conjecture~\ref{Fform}.
By differentiation we check that $\{f_\beta\}_{\beta=1}^{\gamma+2}$ 
satisfies \eqref{P1P1eq}-\eqref{fbreceq} for $w=w_\gamma$, 
showing that $F_\gamma$ 
so defined is $w_\gamma$-biharmonic in $\D$ by 
Proposition~\ref{constructionbiharmonicfcn}.
We have then computed the distributional boundary value 
and normal derivative of $F_\gamma$ using 
Theorem~\ref{asymptoticbdrybehavior} 
and Proposition~\ref{integralmeanexpansion} 
in Section~\ref{boundarybehaviorbbfcns} to check that $F_\gamma$ 
has the appropriate boundary values.

The corresponding statement for $H_\gamma$ reads as follows.
 
\begin{conj}\label{Hform}
For $\gamma$ a non-negative integer, the function $H_\gamma$ has the form 
$$
H_\gamma(z)=\sum_{\beta=1}^{\gamma+1}
\frac{h_\beta(\vert z\vert^2)}{\vert1-z\vert^{2\beta}},
\quad z\in\D,
$$ 
where the functions $\{h_\beta\}_{\beta=1}^{\gamma+1}$ are polynomials given by
$$
2\beta h_\beta(x)=
\sum_{k=0}^{\beta+\gamma+1-\max(2\beta,\gamma+2)}
c_k\binom{\gamma-2k}{\beta-1-k}(1-x)^{\beta+\gamma+1-k}
$$
for $1\leq\beta\leq\gamma+1$,
where the numbers $\{c_k\}_{k=0}^{\lfloor\gamma/2\rfloor}$ 
are given by $c_0=1$ and  
$$
\sum_{k=0}^jc_k\binom{\gamma-2k}{j-k}=1
$$
for $1\leq j\leq\lfloor\gamma/2\rfloor$.
\end{conj}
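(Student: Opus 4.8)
The plan is to prove Conjecture~\ref{Hform} along the lines of the computation of $H_2$ in Section~\ref{F2H2computation}, carried out once with $\gamma$ and $\beta$ kept symbolic. Write $\{h_\beta\}_{\beta=1}^{\gamma+1}$ for the polynomials in the statement, put $h_\beta=0$ for $\beta>\gamma+1$, and set
$$
H(z)=\sum_{\beta=1}^{\gamma+1}\frac{h_\beta(\lvert z\rvert^2)}{\lvert1-z\rvert^{2\beta}},\quad z\in\D.
$$
It suffices to verify that (a) $\{h_\beta\}$ solves the decoupling system \eqref{P1P1eq}--\eqref{fbreceq} with $\tilde w=\tilde w_\gamma$, whence $H$ is $w_\gamma$-biharmonic by Proposition~\ref{constructionbiharmonicfcn}, and (b) $H$ has distributional boundary value $f_0=0$ and inward normal derivative $f_1=\delta_1$ on $\T$; then $H=H_\gamma$ by uniqueness of solutions of \eqref{Dirichletproblem} (\cite[Theorem~2.1]{Olofsson05}). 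After the reductions below, both (a) and (b) become families of binomial-coefficient identities indexed by $\gamma$, and the content of the argument is to establish these uniformly in $\gamma$.

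For (a) I would substitute the expansion $2\beta h_\beta(x)=\sum_k c_k\binom{\gamma-2k}{\beta-1-k}(1-x)^{\beta+\gamma+1-k}$ into \eqref{P1P1eq}--\eqref{fbreceq} and evaluate with Lemma~\ref{diffopmonomial} one monomial at a time. Since each operator block $Q_{\beta+1}\tilde w_\gamma^{-1}Q_\beta$, $P_{\beta+1}\tilde w_\gamma^{-1}Q_\beta+Q_\beta\tilde w_\gamma^{-1}P_\beta$, $P_\beta\tilde w_\gamma^{-1}P_\beta$ sends $(1-x)^k$ to a short sum $\sum_{j=0}^{4}(\text{polynomial in }\beta,k,\gamma)(1-x)^{k-\gamma-j}$, equation \eqref{fbreceq} collapses, one power of $(1-x)$ at a time, into identities among sums $\sum_k c_k\binom{\gamma-2k}{\cdot}(\text{polynomial in }\beta,\gamma)$. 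The natural bookkeeping device is the generating function $\varphi(Y)=\sum_k c_kY^k$: the defining relations $\sum_{k\le j}c_k\binom{\gamma-2k}{j-k}=1$ say precisely that $(1+X)^\gamma\varphi\bigl(X/(1+X)^2\bigr)$ agrees with $(1-X)^{-1}$ through order $X^{\lfloor\gamma/2\rfloor}$, and since the Catalan series $C$ satisfies $C\bigl(X/(1+X)^2\bigr)=1+X$ and $C=1+YC^2$ this determines $\varphi(Y)=\bigl((2-C(Y))\,C(Y)^\gamma\bigr)^{-1}$. A cleaner way to structure the verification is to split the double Laplacian: let $g_\beta=(1-x)^{-\gamma}\bigl(P_\beta h_\beta+Q_{\beta-1}h_{\beta-1}\bigr)$ be the coefficient sequence of $w_\gamma^{-1}\Delta H$, exhibit the $g_\beta$ as explicit polynomials of a similar shape, and then check separately that they represent a \emph{harmonic} function, i.e.\ $P_\beta g_\beta+Q_{\beta-1}g_{\beta-1}=0$ for every $\beta$; this replaces the single three-term recursion by two more transparent layers. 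I expect the main difficulty to sit precisely here: \eqref{fbreceq} couples three consecutive $h_\beta$, the admissible exponent range carries the cut-off $\max(2\beta,\gamma+2)$, and Lemma~\ref{diffopmonomial} can produce negative powers $(1-x)^{k-\gamma-4}$ for small $k$, so confirming that every stray contribution cancels — the $\gamma$-uniform form of the relations ``$12-10c_2+6c_1=0$'' and ``$c_2=3$'' from the proof of Proposition~\ref{H2typeconstruct} — is a substantial, likely Zeilberger-assisted, task. Induction on $\gamma$ via $\tilde w_{\gamma+1}=(1-x)\tilde w_\gamma$ is an alternative organizing principle.

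For (b), every summand of $H$ has the form $(1-\lvert z\rvert^2)^m/\lvert1-z\rvert^{2\beta}$ with $m\ge\max(2\beta,\gamma+2)\ge2\beta$. When $m\ge2\beta+1$ one pulls out the factor $(1-r^2)^{m-2\beta+1}=O((1-r)^2)$ and the remaining factor is a building block governed by Theorem~\ref{asymptoticbdrybehavior} (for $\beta=1$ it is a power of $(1-r^2)$ times the Poisson kernel), so such a summand has $f_0=0$ and $f_1=0$. When $m=2\beta$ the argument of Theorem~\ref{asymptoticbdrybehavior} applies after the factorization $(1-r^2)^{2\beta}/\lvert1-re^{i\theta}\rvert^{2\beta}=(1-r^2)^2\bigl((1-r^2)^{2\beta-2}/\lvert1-re^{i\theta}\rvert^{2\beta-2}\bigr)\lvert1-re^{i\theta}\rvert^{-2}$, whose middle factor is uniformly bounded with uniformly bounded integral means, and yields $\langle u_r,\varphi\rangle=\hat u_r(0)\varphi(1)+O((1-r)^2)$ with $\hat u_r(0)=(1-r^2)\bigl(a_\beta+O(1-r)\bigr)$ by Parseval, where $a_\beta=\binom{2\beta-2}{\beta-1}$ is the constant of Proposition~\ref{integralmeanexpansion} (cf.\ Remark~\ref{asymptoticssums}); hence $f_0=0$ and this summand alone contributes $2a_\beta\delta_1$ to $f_1$. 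The unique summand of $H$ with $m=2\beta$ is the $k=\gamma+1-\beta$ term of $2\beta h_\beta$, of coefficient $c_{\gamma+1-\beta}/(2\beta)$ (the binomial there being $\binom{2\beta-\gamma-2}{2\beta-\gamma-2}=1$), and it occurs exactly for $(\gamma+2)/2\le\beta\le\gamma+1$; so $f_0=0$ and, reindexing by $k=\gamma+1-\beta$ and using $a_\beta/\beta=C_{\beta-1}$, the Catalan number,
$$
f_1=\Bigl(\,\sum_{k=0}^{\lfloor\gamma/2\rfloor}C_{\gamma-k}\,c_k\Bigr)\delta_1.
$$
Finally $\sum_{k=0}^{\lfloor\gamma/2\rfloor}C_{\gamma-k}c_k=1$: this sum equals $[Y^\gamma]\bigl(C(Y)\varphi(Y)\bigr)$ (from $\varphi(Y)=\bigl((2-C(Y))C(Y)^\gamma\bigr)^{-1}$ one sees that the $c_k$ with $\lfloor\gamma/2\rfloor<k\le\gamma$ vanish), and since $C(Y)\varphi(Y)=(1-X)^{-1}(1+X)^{1-\gamma}$ under the substitution $Y=X/(1+X)^2$, Lagrange inversion gives $[Y^\gamma]\bigl(C(Y)\varphi(Y)\bigr)=\tfrac1\gamma[X^{\gamma-1}]\Bigl(\tfrac{d}{dX}\bigl((1-X)^{-1}(1+X)^{1-\gamma}\bigr)(1+X)^{2\gamma}\Bigr)=1$. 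Combining (a) and (b) with uniqueness of solutions of the distributional Dirichlet problem yields $H=H_\gamma$.
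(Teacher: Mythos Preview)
The paper does not prove this statement: it is explicitly a \emph{conjecture} (Conjecture~\ref{Hform}), and the only evidence offered is computer verification, case by case with Maxima, for integer $\gamma$ in the range $0\le\gamma\le80$. There is therefore no proof in the paper to compare your attempt against.

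Your proposal is not a proof either, but a plan. The split into (a) biharmonicity via the decoupling system \eqref{P1P1eq}--\eqref{fbreceq} and (b) boundary behaviour is exactly the paper's template from Section~\ref{F2H2computation}, promoted from a fixed $\gamma$ to a symbolic one. The crux is part (a), and you leave it undone: you describe how substituting the $h_\beta$'s into \eqref{fbreceq} via Lemma~\ref{diffopmonomial} would produce binomial identities, mention a two-layer reformulation through intermediate $g_\beta$'s ``of a similar shape'' (no shape is given), and then defer everything as ``a substantial, likely Zeilberger-assisted, task''. That is precisely the content of the conjecture; until those identities are actually written out and established, your argument does not go beyond what the paper already does for each individual $\gamma$.

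Part (b) is largely sound. The reduction to the single identity $\sum_{k\le\lfloor\gamma/2\rfloor}c_kC_{\gamma-k}=1$ is correct, and your generating-function set-up is the right one. One point should be made explicit: you compute $[Y^\gamma]\bigl(C(Y)\varphi(Y)\bigr)$ by replacing $\varphi$ with the full power series $\bigl((2-C)C^\gamma\bigr)^{-1}$, which is only justified once you know the Taylor coefficients of the latter vanish for $\lfloor\gamma/2\rfloor<k\le\gamma$. You assert this parenthetically without argument; in fact Lagrange inversion applied to $\psi(X/(1+X)^2)=(1+X)^{-\gamma}/(1-X)$ gives $[Y^k]\psi=\binom{2k-\gamma-1}{k}$, which both proves the vanishing and furnishes a closed form for the $c_k$. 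With that in hand the Catalan sum indeed equals $1$. But (b) alone does not touch the $w_\gamma$-biharmonicity, so the conjecture remains open in your write-up as in the paper.
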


We remark that the last condition determining the $c_k$'s in Conjecture~\ref{Hform} 
can be stated that $2\beta h_\beta(0)=1$ for $1\leq\beta\leq\gamma+1$.

Notice that we conjecture that the polynomial $h_\beta(x)$
is a certain linear combination of 
monomials $(1-x)^k$ with exponents $k$ in 
the range $\max(2\beta,\gamma+2)\leq k\leq\beta+\gamma+1$.
For $\beta=1,\gamma+1$ this range consists of a single exponent $k=\beta+\gamma+1$, 
whereas for intermediate values of $\beta$ the number of exponents grow 
to a maximum of $\lfloor\gamma/2\rfloor+1$ terms. 

As an example we mention that the function $H_5$ has the form 
$$
H_5(z)=\sum_{\beta=1}^6\frac{h_\beta(\vert z\vert^2)}{\vert1-z\vert^{2\beta}},
\quad z\in\D,
$$
where
\begin{align*}
2h_1(x)&=(1-x)^7,\\
4h_2(x)&=5(1-x)^8-4(1-x)^7,\\
6h_3(x)&=10(1-x)^9-12(1-x)^8+3(1-x)^7,\\
8h_4(x)&=10(1-x)^{10}-12(1-x)^9+3(1-x)^8,\\
10h_5(x)&=5(1-x)^{11}-4(1-x)^{10},\\
12 h_6(x)&=(1-x)^{12}.
\end{align*}
The columns in the above table of coefficients are up to normalization 
rows in the standard table of binomial coefficients:
the numbers $1$, $5$, $10$, $10$, $5$, $1$ are 
$\binom{5}{k}$, $0\leq k\leq 5$, 
the numbers $1$, $3$, $3$, $1$ are $\binom{3}{k}$, $0\leq k\leq 3$, 
the numbers $1$, $1$ are  $\binom{1}{k}$, $0\leq k\leq 1$.
The constants of normalization 
$\{c_k\}_{k=0}^2$ are determined by 
$2\beta h_\beta(0)=1$ for $1\leq\beta\leq6$, that is,
$c_0=1$, $c_1=-4$ and $c_2=3$. 

Using computer calculations 
we have verified Conjecture~\ref{Hform} for integer weight parameters $\gamma$ 
in the range $0\leq\gamma\leq 80$. This verification has been made 
similarly as was described for $F_\gamma$ above.

Our computer calculations have 
been carried out using the open source 
computer algebra package Maxima which is freely available under 
the GNU General Public License agreement 
(see http://maxima.sourceforge.net/index.shtml).

\section{Further results and comments}\label{furtherresults}

\setcounter{equation}{0}
\setcounter{thm}{0}
\setcounter{prop}{0}
\setcounter{lemma}{0}
\setcounter{cor}{0}
\setcounter{remark}{0}

Let us return to the 
Dirichlet problem (\ref{Dirichletproblem}).
Let  $w:\D\to(0,\infty)$ be a smooth radial weight function.
It is known that we have 
uniqueness of distributional solutions of 
(\ref{Dirichletproblem}) provided the weight function $w$ is area integrable, 
that is, $\int_0^1\tilde w(x)dx<\infty$ 
(see~\cite[Theorem~2.1]{Olofsson05}).
To ensure existence of a distributional solution $u$ of (\ref{Dirichletproblem}) 
for any given distributional boundary data $f_j\in\De'(\T)$ ($j=0,1$) 
we need in addition to area integrability 
of $w$ the assumption that
$$
\int_0^1 x^k\tilde w(x)dx\geq c(1+k)^{-N},\quad k\geq0,
$$
for some positive constants $c$ and $N$. 
This last condition on the moments of $w$ is satisfied if
$$
\inf_{0<x<1}\int_x^1\tilde w(x)dx/(1-x)^\alpha>0
$$ 
for some $\alpha>0$ 
(see~\cite[Section~3]{Olofsson05}).

We shall consider admissible radial weight functions $w:\D\to(0,\infty)$ 
such that the Poisson kernels 
$F_w$ and $H_w$ satisfy the $L^1$-bounds 
\begin{equation}\label{FL1bound}
\frac{1}{2\pi}\int_\T\vert F_w(re^{i\theta})\vert d\theta\leq C
\end{equation} 
and 
\begin{equation}\label{HL1bound}
\frac{1}{2\pi}\int_\T\vert H_w(re^{i\theta})\vert d\theta\leq C(1-r)
\end{equation} 
for $0\leq r<1$, where $C$ is an absolute constant.
Notice that \eqref{FL1bound} gives that 
$\lim_{r\to1}F_{w,r}=\delta_1$ in $\De'^0(\T)$ (distributions of order $0$) 
and similarly for \eqref{HL1bound}.
Notice also that these $L^1$-bounds \eqref{FL1bound} and \eqref{HL1bound} 
are satisfied for $w=w_\gamma$ if $F_\gamma$ and $H_\gamma$ 
verifies Conjectures~\ref{Fform} and~\ref{Hform} 
(see Section~\ref{boundarybehaviorbbfcns}). 

Recall that a homogeneous Banach space is a Banach space $B$ 
of distributions on $\T$ continuously embedded into $\De'(\T)$ 
such that for every $f\in B$ the translation (rotation)  
$$
\T\ni e^{i\tau}\mapsto f_{e^{i\tau}}\in B
$$
is a continuous $B$-valued map on $\T$ 
(see~\cite[Section~I.2]{Katznelson}). 
Here for $f\in L^1(\T)$ the translation $f_{e^{i\tau}}$ is defined by 
$f_{e^{i\tau}}(e^{i\theta})=f(e^{i(\theta-\tau)})$ for $e^{i\theta}\in\T$, 
and then extended to distributions in a standard way 
(see any text on distribution theory, for instance H\"ormander~\cite{Hormander}).

Recall that $\hat{f}(k)$ denotes the $k$-th Fourier coefficient of $f\in\De'(\T)$. 
In an earlier paper~\cite{Olofsson06} we have proved the following result. 

\begin{thm}\label{regularityDirichletproblem}
Let $w:\D\to(0,\infty)$ be an admissible radial weight function such that 
the Poisson kernels $F_w$ and $H_w$ satisfy~\eqref{FL1bound} 
and~\eqref{HL1bound}. Let $B_j$ ($j=0,1$) be two homogeneous 
Banach spaces such that
\begin{equation}\label{Blinking}
\{\kappa*f_0:\ f_0\in B_0\}=\{f_1\in B_1:\ \hat{f}_1(0)=0\},
\end{equation}
where $\kappa$ is as in Section~\ref{boundarybehaviorbbfcns}.
Let $f_j\in B_j$ ($j=0,1$) and 
let $u$ be the corresponding distributional solution of \eqref{Dirichletproblem}. 
Then the solution $u$ has the appropriate boundary values measured in the 
sense of convergence in the spaces $B_j$ ($j=0,1$):
$$
\lim_{r\to1}u_r=f_0\quad\text{in}\ B_0\quad\text{and}\quad
\lim_{r\to1}(u_r-f_0)/(1-r)=f_1\quad\text{in}\ B_1,
$$
where the $u_r$'s are as in \eqref{dfndilation} in the introduction.
\end{thm}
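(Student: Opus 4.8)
The plan is to feed the representation $u_r=F_{w,r}*f_0+H_{w,r}*f_1$ into the functional analysis of homogeneous Banach spaces and reduce everything to convergence statements for convolution operators. Two standard facts will be used throughout. First, convolution by an $L^1(\T)$-function is a bounded operator on any homogeneous Banach space $B$, with operator norm controlled by the $L^1$-norm, as one sees by writing $K*g=\frac{1}{2\pi}\int_\T K(e^{i\tau})g_{e^{i\tau}}\,d\tau$ as a $B$-valued integral. Second, a summability lemma: if $\{K_r\}_{0\le r<1}\subset L^1(\T)$ satisfies $\sup_r\|K_r\|_{L^1}<\infty$ and $\widehat{K_r}(k)\to1$ as $r\to1$ for every $k\in\Z$, then $K_r*g\to g$ in $B$ for every $g\in B$; this follows from the density of trigonometric polynomials in $B$ (Fej\'er means) and the uniform boundedness of the operators $g\mapsto K_r*g$ by a standard $\varepsilon/3$-argument. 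Applying this to $K_r=F_{w,r}$ (legitimate by \eqref{FL1bound}, since $F_w=\delta_1$ on $\T$ gives $\widehat{F_{w,r}}(k)\to\widehat{\delta_1}(k)=1$) yields $F_{w,r}*f_0\to f_0$ in $B_0$, and to $K_r=H_{w,r}/(1-r)$ (legitimate by \eqref{HL1bound}, since $\partial_nH_w=\delta_1$ gives $\widehat{H_{w,r}}(k)/(1-r)\to1$) yields $\frac{1}{1-r}H_{w,r}*g\to g$ in $B_j$ for every $g\in B_j$. I would also record that \eqref{Blinking} forces $B_0\subseteq B_1$: the function $L:=\sum_{k\ne0}|k|^{-1}e^{ik\theta}$ lies in $L^1(\T)$, so $f_0-\widehat{f_0}(0)=\kappa*(L*f_0)$ with $L*f_0\in B_0$, whence $f_0-\widehat{f_0}(0)\in B_1$ by \eqref{Blinking} and hence $f_0\in B_1$.

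The crucial auxiliary step is an identity relating $F_w$, $H_w$ and the ordinary Poisson kernel $P$. For $g\in\De'(\T)$ the ordinary Poisson extension $P[g](re^{i\theta})=(P_r*g)(e^{i\theta})$ is harmonic, hence $w$-biharmonic, has boundary value $g$, and, since $\partial_nP=-\kappa$, has inward normal derivative $-\kappa*g$; thus $P[g]$ solves \eqref{Dirichletproblem} with data $(g,-\kappa*g)$ and, by uniqueness of distributional solutions, $P_r*g=F_{w,r}*g-H_{w,r}*\kappa*g$. Taking $g=\delta_1$ gives the $L^1(\T)$-identity $H_{w,r}*\kappa=F_{w,r}-P_r$ for $0\le r<1$. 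Now, given $f_1\in B_1$, write $f_1=\kappa*g_0+\widehat{f_1}(0)$ with $g_0\in B_0$ (possible by \eqref{Blinking}); then
\[
H_{w,r}*f_1=(F_{w,r}-P_r)*g_0+\widehat{f_1}(0)\,\widehat{H_{w,r}}(0),
\]
where $(F_{w,r}-P_r)*g_0=F_{w,r}*g_0-P_r*g_0\to g_0-g_0=0$ in $B_0$ (summability lemma and the classical Poisson approximate identity) and $|\widehat{H_{w,r}}(0)|\le\|H_{w,r}\|_{L^1}\le C(1-r)\to0$. Hence $H_{w,r}*f_1\to0$ in $B_0$, which together with $F_{w,r}*f_0\to f_0$ yields the first boundary condition $u_r\to f_0$ in $B_0$.

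For the second boundary condition, write $\frac{u_r-f_0}{1-r}=\frac{F_{w,r}*f_0-f_0}{1-r}+\frac{1}{1-r}H_{w,r}*f_1$; the last term tends to $f_1$ in $B_1$ by the summability lemma, so it suffices to show $\frac{F_{w,r}*f_0-f_0}{1-r}\to0$ in $B_1$. By the identity $H_{w,r}*\kappa=F_{w,r}-P_r$,
\[
\frac{F_{w,r}*f_0-f_0}{1-r}=\frac{P_r*f_0-f_0}{1-r}+\frac{1}{1-r}H_{w,r}*(\kappa*f_0),
\]
and since $\kappa*f_0\in B_1$ by \eqref{Blinking}, the second term tends to $\kappa*f_0$ in $B_1$. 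For the first term I would use $s\,\partial_sP_s=P_s*\kappa$ (immediate from $\widehat{P_s}(k)=s^{|k|}$), so that $\partial_s(P_s*f_0)=s^{-1}P_s*(\kappa*f_0)$ is a continuous $B_1$-valued function of $s$ on $(0,1]$ with value $\kappa*f_0$ at $s=1$, while $P_s*f_0\to f_0$ in $B_1$ as $s\to1$ (using $f_0\in B_1$). The fundamental theorem of calculus in $B_1$ then gives $f_0-P_r*f_0=\int_r^1 s^{-1}P_s*(\kappa*f_0)\,ds$, so $\frac{P_r*f_0-f_0}{1-r}\to-\kappa*f_0$ in $B_1$ as $r\to1$, being an average of a continuous function near $s=1$. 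Adding the two terms completes the proof.

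I expect the main obstacle to be precisely this last point: showing that the Poisson difference quotient $\frac{P_r*f_0-f_0}{1-r}$ converges to $-\kappa*f_0$ \emph{in the norm of $B_1$} rather than merely in $\De'(\T)$. The naive estimate fails because $\frac{P_r-\delta_1}{1-r}$ is not uniformly bounded in $L^1(\T)$ (its mass is of order $1/(1-r)$), so one cannot treat it as a summability kernel. The identity $s\,\partial_sP_s=P_s*\kappa$ circumvents this by rewriting the radial derivative of the Poisson means of $f_0$ as the Poisson means of $\kappa*f_0$, which already lies in $B_1$ by the linking hypothesis \eqref{Blinking}; this reduces the delicate estimate to the summability lemma together with an elementary averaging argument.
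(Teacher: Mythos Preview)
The paper gives no self-contained argument here: it defers entirely to \cite{Olofsson06}, remarking only that the $L^1$-bounds \eqref{FL1bound}--\eqref{HL1bound} replace Lemmas~1.1 and~2.1 of that reference. Your proposal reconstructs what such an argument must contain, and the pieces fit together correctly: the summability lemma for homogeneous Banach spaces (driven by \eqref{FL1bound} for $F_{w,r}$ and by \eqref{HL1bound} for $H_{w,r}/(1-r)$), the uniqueness identity $H_{w,r}*\kappa=F_{w,r}-P_r$ obtained by recognizing the harmonic Poisson extension as a $w$-biharmonic solution with data $(g,-\kappa*g)$, and the relation $s\,\partial_sP_s=P_s*\kappa$ which lets you rewrite $(f_0-P_r*f_0)$ as the $B_1$-valued integral $\int_r^1 s^{-1}P_s*(\kappa*f_0)\,ds$ and thereby obtain the delicate limit $(P_r*f_0-f_0)/(1-r)\to-\kappa*f_0$ in $B_1$. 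Your identification of this last step as the crux, and your way around the failure of $(P_r-\delta_1)/(1-r)$ to be $L^1$-bounded, are exactly right.

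One point to tighten. The decomposition $f_1=\kappa*g_0+\hat f_1(0)$ (needed to show $H_{w,r}*f_1\to0$ in $B_0$) and the assertion $f_0\in B_1$ both silently assume that the constant function $1$ belongs to $B_0$ and $B_1$. This holds in every example the paper lists (Sobolev spaces, little-oh H\"older classes) and is doubtless part of the standing hypotheses in \cite{Olofsson06}, but it is not forced by the bare definition of homogeneous Banach space recalled here. You should either state it as an assumption, or note that only $f_0-\hat f_0(0)\in B_1$ is actually used in the fundamental-theorem-of-calculus step (since $P_r*1=1$), and treat the constant contribution $\hat f_1(0)\,\widehat{H_{w,r}}(0)\cdot 1$ separately.
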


\begin{proof}
For full details of proof we refer to~\cite{Olofsson06};
the $L^1$-bounds \eqref{FL1bound} and \eqref{HL1bound} replace 
Lemmas~1.1 and~2.1 in that paper. 
\end{proof}

An example of a pair of homogeneous Banach spaces $B_j$ ($j=0,1$) satisfying 
the assumption \eqref{Blinking} 
in Theorem~\ref{regularityDirichletproblem} is provided by 
the Sobolev spaces 
$$
B_0=W^{m,p}(\T)\quad\text{and}\quad B_1=W^{m-1,p}(\T), 
$$
where $m\geq1$ is an integer and $1<p<\infty$.
Examples of spaces $B_j$ ($j=0,1$) satisfying 
\eqref{Blinking} can also be constructed using little oh 
Lipschitz/H\"older conditions on derivatives. 
We refer to~\cite{Olofsson06} for details.

We consider it a problem of interest to find conditions on 
a weight function $w:\D\to(0,\infty)$ which ensure the validity 
of the $L^1$-bounds \eqref{FL1bound} and \eqref{HL1bound}.

To this end we wish to mention also a somewhat related paper by Weir~\cite{W2} 
concerned with the derivation of a formula for the 
$w_1$-weighted biharmonic Green function 
which originates from Hedenmalm~\cite{Hprobl}.
Also, numerical studies of a phenomenon of 
extraneous zeros of Bergman kernel functions have been performed by Hedenmalm, 
Jakobsson and Perdomo (see~\cite[Section~4]{HP}).


\end{document}